\newif\ifsiopt
\newcommand{\innp}[1]{\left\langle #1 \right\rangle}
\newcommand{\mA}{\boldsymbol{A}}
\newcommand{\mI}{\boldsymbol{I}}
\newcommand{\zeros}{\textbf{0}}
\newcommand{\vx}{\boldsymbol{x}}
\newcommand{\vxb}{\boldsymbol{\bar{x}}}
\newcommand{\cF}{\mathcal{F}}
\newcommand{\cc}{\mathcal{C}}
\newcommand{\cA}{\mathcal{A}}
\newcommand{\vxh}{\boldsymbol{\hat{x}}}
\newcommand{\vyb}{\boldsymbol{\bar{y}}}
\newcommand{\vy}{\boldsymbol{y}}
\newcommand{\vv}{\boldsymbol{v}}
\newcommand{\vb}{\boldsymbol{b}}
\newcommand{\vg}{\boldsymbol{g}}
\newcommand{\vu}{\boldsymbol{u}}
\newcommand{\vub}{\bar{\boldsymbol{u}}}
\newcommand{\rr}{\mathbb{R}}
\theoremstyle{plain} \numberwithin{equation}{section}
\newtheorem{theorem}{Theorem}[section]
\numberwithin{theorem}{section}
\newtheorem{conjecture}{Conjecture}
\newtheorem{lemma}[theorem]{Lemma}
\newtheorem{fact}[theorem]{Fact}
\theoremstyle{definition}
\newtheorem{definition}[theorem]{Definition}
\newtheorem{remark}[theorem]{Remark}
\DeclareMathOperator*{\argmin}{argmin}
\newcommand{\subalign}[1]{%
  \vcenter{%
    \Let@ \restore@math@cr \default@tag
    \baselineskip\fontdimen10 \scriptfont\tw@
    \advance\baselineskip\fontdimen12 \scriptfont\tw@
    \lineskip\thr@@\fontdimen8 \scriptfont\thr@@
    \lineskiplimit\lineskip
    \ialign{\hfil$\m@th\scriptstyle##$&$\m@th\scriptstyle{}##$\hfil\crcr
      #1\crcr
    }%
  }%
}
\newcommand\norm[1]{\left\lVert#1\right\rVert}
\title{Potential Function-based Framework for Making the Gradients Small in Convex and Min-Max Optimization\thanks{This research was partially supported by the NSF grant CCF-2007757 and by the Office of the Vice
Chancellor for Research and Graduate Education at the University of Wisconsin–Madison with funding from the
Wisconsin Alumni Research Foundation. Part of this research was done while PW was attending UW-Madison as part of the Visiting International Student Program (VISP).}}
\author{Jelena Diakonikolas\\
Department of Computer Sciences\\
University of Wisconsin-Madison\\
\texttt{jelena@cs.wisc.edu}
\and
Puqian Wang\\
School of Mathematics\\
Shandong University\\
\texttt{e.puqian.wang@gmail.com}}
\date{}
\begin{document}

\maketitle

\ifsiopt
% REQUIRED
\begin{keywords}
  gradient minimization, convergence analysis, potential function
\end{keywords}

% REQUIRED
\begin{AMS}
  90C06, 90C25, 65K05
\end{AMS}
\fi

\begin{abstract}
    Making the gradients small is a fundamental optimization problem that has eluded unifying and simple convergence arguments in first-order optimization, so far primarily reserved for other convergence criteria, such as reducing the optimality gap. We introduce a novel potential function-based framework to study the convergence of standard methods for making the gradients small in smooth convex optimization and convex-concave min-max optimization. Our framework is intuitive and it provides a lens for viewing algorithms that make the gradients small as being driven by a trade-off between reducing either the gradient norm or a certain notion of an optimality gap.  On the lower bounds side, we discuss tightness of the obtained convergence results for the convex setup and provide a new lower bound for  minimizing norm of cocoercive operators that allows us to argue about optimality of methods in the min-max setup.  %We complement our convergence analysis of standard algorithms with a discussion about  tightness of obtained convergence bounds and also provide a new lower bound for minimizing norm of cocoercive operators that allows us to argue about optimality of methods for smooth min-max optimization.  
\end{abstract}

%%%%%%%%%%%%%%%%%%%%%%
\section{Introduction}
%%%%%%%

One of the most basic facts in convex optimization is that a differentiable convex function attains its minimum at a point where its gradient equals zero, provided such a point exists. Thus, it is tempting to conclude that there is no difference between minimizing the function value or its gradient (in any suitable norm). This is only partially true, as we are almost never guaranteed to find a point at which the function is minimized; instead, we opt for a more modest goal of approximating such points. As it turns out, from an algorithmic point of view, there are major differences between guarantees provided for the function value (or optimality gap) and norm of its gradient. 

Much of the standard optimization literature on smooth (gradient-Lipschitz) convex first-order optimization has been concerned with providing guarantees for the optimality gap. There is comparatively much less work on guarantees for the norm of the gradient, most of it being initiated after the work of Nesterov~\cite{nesterov2012make}, which argued that such guarantees are natural and more informative than those based on the function value for certain linearly constrained optimization problems that frequently arise in applications. Further, unlike the optimality gap, which would require knowledge of the minimum function value to be usable as a stopping criterion, the norm of the gradient is readily available to the algorithm as a stopping criterion, as standard first-order methods define their iterates based on the gradient information. This insight is particularly useful for the design of parameter-free algorithms (i.e., algorithms that do not require knowledge of function parameters such as smoothness, strong convexity, or sharpness/constants of {\L}ojasiewicz inequality; see, e.g.,~\cite{lojasiewicz1963propriete,lojasiewicz1965ensembles,bolte2010characterizations,attouch2013convergence}), and as such has been used to design parameter-free algorithms that are near-optimal in terms of iteration complexity (i.e., optimal up to poly-logarithmic factors)~\cite{nesterov2013methods,lin2014adaptive,ito2019gradientmapping}. 

As for $L$-smooth functions the norm of the gradient can be bounded above as a function of the optimality gap $f(\vx) - f(\vx^*),$ where $\vx^* \in \argmin_{\vx} f(\vx)$, using 
\begin{equation}\label{eq:grad-norm-to-opt-gap}
    \frac{1}{2L}\|\nabla f(\vx)\|^2 \leq f(\vx) - f(\vx^*),
\end{equation} 
it is not surprising that convergence rates can be established for gradient norm minimization. What \emph{is} surprising, however, is that those rates can be faster than what is implied by Eq.~\eqref{eq:grad-norm-to-opt-gap} and existing results for convergence in function value/optimality gap. In particular, methods that are optimal in terms of iteration complexity for minimizing the optimality gap are not necessarily optimal for gradient norm optimization, and vice-versa. More specifically, the fast gradient method (FGM) of Nesterov~\cite{nesterov1983method} is iteration complexity-optimal for minimizing the optimality gap, but it is suboptimal for minimizing the gradient norm~\cite{kim2018generalizing,carmon2019lower}. 

More generally, the existing literature has not yet shed light on what is the basic mechanism that drives algorithms for gradient norm minimization. 
The only known iteration complexity-optimal algorithm for minimizing norm of the gradient of a smooth convex function is due to Kim and Fessler~\cite{kim2020optimizing}.\footnote{The optimality of the algorithm can be certified using the lower bound from~\cite{carmon2019lower}.} This algorithm was obtained by using the performance estimation framework of Drori and Teboulle~\cite{drori2014performance}, originally developed for understanding the worst-case performance of optimization algorithms. The algorithm~\cite{kim2020optimizing} itself and its convergence analysis are inferred from numerical solutions to a semidefinite program (SDP). As such, the intuition behind what is driving the convergence analysis of the algorithm and how the improved convergence rate is obtained is lacking, which constitutes an impediment to possibly generalizing this algorithm to other optimization settings. 

Even less is known in the setting of smooth convex-concave min-max optimization, where (near-)optimal convergence results have been established only recently~\cite{diakonikolas2020halpern,kim2019accelerated,lieder2020convergence,sabach2017first} and the problem has been much less studied from the aspect of oracle lower bounds~\cite{Ouyang2019,diakonikolas2020halpern,golowich2020last}. In particular, similar as in the case of convex optimization, classical methods for min-max optimization that are optimal for reducing the primal-dual gap, such as, e.g., the extragradient method~\cite{korpelevich1977extragradient}, mirror-prox~\cite{nemirovski2004prox}, and dual extrapolation~\cite{nesterov2007dual}, are suboptimal in terms of iteration complexity for minimizing the gradient norm. Interestingly, however, the methods that turn out to be (near-)optimal were originally studied in the context of fixed point iterations~\cite{krasnosel1955two,mann1953mean,halpern1967fixed}. 

In this paper, we introduce a novel potential function-based framework to study the convergence in gradient norm for smooth convex and convex-concave optimization problems. Our framework is intuitive, as it relies on establishing convergence of standard methods by interpreting it as a trade-off between reducing the gradient norm and reducing a notion of an optimality gap. The same view can be adopted in a unifying manner for methods such as  standard gradient descent, Nesterov FGM~\cite{nesterov1983method}, optimized method of Kim and Fessler~\cite{kim2020optimizing}, gradient descent-ascent (which is equivalent to Krasnosel'ski{\i}-Mann iteration~\cite{krasnosel1955two,mann1953mean}; see Section~\ref{sec:gda}), and Halpern iteration~\cite{halpern1967fixed}. We further complement these results with a discussion of optimality of the considered methods for convex optimization, and with a new lower bound for minimizing the norm of cocoercive operators (see Section~\ref{sec:prelims} for a precise definition and relationship to min-max optimization), which allows us to discuss optimality of gradient descent-ascent and Halpern iteration as methods for minimizing the gradient norm in smooth convex-concave min-max optimization.   

%We mainly consider the setting in which the gradient of the function is cocoercive (see Section~\ref{sec:prelims} for a precise definition) 

\subsection{Further Related Work} 

Understanding the phenomenon of acceleration and providing a unifying theory of first-order optimization algorithms has been an important topic in optimization research, with a flurry of recent research activity in this area~\cite{tseng2008,AO-survey-nesterov,attouch2000heavy,attouch2019rate,SuBC16,wibisono2016variational,wilson2016lyapunov,zhang2018direct,shi2018understanding,krichene2015accelerated,Scieur2017,Bubeck2015,drusvyatskiy2016optimal,lin2015universal,betancourt2018symplectic,AXGD,thegaptechnique,attouch2000heavy-esc,hu2017control,lessard2016analysis,song2021unified,diakonikolas2021generalized,attouch2020first}. However, the existing literature has almost exclusively focused on the optimality gap guarantees, with only a small subset of results seeking to provide guarantees for gradient norm and primarily addressing FGM-type algorithms with suboptimal rates~\cite{diakonikolas2021generalized,shi2018understanding,attouch2020first}. 

Complementary to the literature discussed above, whose focus has been on deriving intuitive convergence analysis frameworks, another line of work has focused on using the SDP-based performance estimation framework of Drori and Teboulle~\cite{drori2014performance} to investigate the worst-case performance of optimization algorithms~\cite{kim2018generalizing,kim2019accelerated,kim2020optimizing,lieder2020convergence,de2020worst,taylor2017exact}. Most relevant to our work among these results are:  \cite{kim2018generalizing}, which investigated the worst-case performance of FGM-type methods in terms of gradient norm minimization, \cite{kim2020optimizing}, which obtained the first (and so far, the only) iteration complexity-optimal algorithm for minimizing the gradient norm of smooth convex functions, and  \cite{lieder2020convergence}, which obtained a tight worst-case convergence bound for Halpern iteration. While the SDP-based approach used in this line of work is useful for understanding the worst-case performance of existing algorithms (and even obtaining new algorithms~\cite{kim2020optimizing}), its downside is that, because the convergence arguments are computer-assisted (namely, they are inferred from numerical solutions to SDPs), they are generally not suitable for developing intuition about what is driving the methods and their analysis. Our work fills this gap by providing intuitive convergence proofs based on potential function arguments.

%%%%%%%%%%%%%%%%%%%%%%%%
\subsection{Notation and Preliminaries}\label{sec:prelims}
%%%%%%
Throughout the paper, we consider the Euclidean space $(\rr^d, \|\cdot\|),$ where $\|\cdot\| = \sqrt{\innp{\cdot, \cdot}}$ is the Euclidean norm and $\innp{\cdot, \cdot}$ denotes any inner product on $\rr^d$. We use $\{A_k\}_{k\geq 0}$ and $\{B_k\}_{\geq 0}$ to denote sequences of nondecreasing nonnegative numbers, and define $a_0 = A_0,$ $a_k = A_{k} - A_{k-1}$ for $k \geq 1,$ and, similarly, $b_0 = B_0,$ $b_k = B_{k} - B_{k-1}$ for $k \geq 1.$ 

We consider two main problem setups: (i) making the gradients small in convex optimization, and (ii) making the gradients small in min-max optimization.

\paragraph{Convex optimization.} 
In the first setup, we assume we are given first-order oracle access to a convex continuously differentiable function $f: \rr^d \to \rr.$ The first-order definition of convexity then applies, and we have:
\begin{equation*}
    (\forall \vx, \vy \in \rr^d): \quad f(\vy) \geq f(\vx) + \innp{\nabla f(\vx), \vy - \vx}.
\end{equation*}
We further assume that $f$ is $L$-smooth, i.e., that its gradients are $L$-Lipschitz continuous:
\begin{equation*}
    (\forall \vx, \vy \in \rr^d): \quad \|\nabla f(\vx) - \nabla f(\vy)\| \leq L\|\vx - \vy\|.
\end{equation*}
Recall that smoothness of $f$ implies:
\begin{equation}\label{eq:smooth-ub}
    (\forall \vx, \vy \in \rr^d):\quad f(\vy) \leq f(\vx) + \innp{\nabla f(\vx), \vy - \vx} + \frac{L}{2}\|\vy - \vx\|^2.
\end{equation}

The goal of the first setup is to, given $\epsilon > 0,$ construct a point $\vx$ such that
%\begin{equation*}
 $   \|\nabla f(\vx)\| \leq \epsilon$
%\end{equation*}
in as few iterations (oracle queries to the gradient of $f$) as possible.
A useful fact that turns out to be crucial for the analysis in the convex case is the following (see, e.g.,~\cite[Section 3.5]{Zalinescu:2002}).
\begin{fact}\label{fact:smooth+convex}
A continuously differentiable function $f:\rr^d \to \rr$ is $L$-smooth and convex if and only if
\begin{equation}\label{eq:smooth+cvx}
    (\forall \vx, \vy \in \rr^d):\quad \frac{1}{2L}\|\nabla f(\vy) - \nabla f(\vx)\|^2 \leq f(\vy) - f(\vx) - \innp{\nabla f(\vx), \vy - \vx}.
\end{equation}
\end{fact}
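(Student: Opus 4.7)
The plan is to prove the two implications of the equivalence separately, leveraging the smoothness upper bound \eqref{eq:smooth-ub} in one direction and the Cauchy--Schwarz inequality in the other.

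For the forward direction, assume that $f$ is convex and $L$-smooth. I would fix an arbitrary $\vx \in \rr^d$ and introduce the auxiliary function $\phi(\vz) \defeq f(\vz) - \innp{\nabla f(\vx), \vz}$. The function $\phi$ is a sum of the convex function $f$ and a linear function, so it is convex, and since its gradient $\nabla \phi(\vz) = \nabla f(\vz) - \nabla f(\vx)$ is an $L$-Lipschitz perturbation of $\nabla f$, it is also $L$-smooth. Crucially, $\nabla \phi(\vx) = 0$, so $\vx$ minimizes $\phi$. Applying the smoothness upper bound \eqref{eq:smooth-ub} to $\phi$ at the point $\vy - \tfrac{1}{L}\nabla\phi(\vy)$, I would obtain
\begin{equation*}
\phi(\vx) \;\leq\; \phi\!\left(\vy - \tfrac{1}{L}\nabla\phi(\vy)\right) \;\leq\; \phi(\vy) - \tfrac{1}{2L}\|\nabla\phi(\vy)\|^2,
\end{equation*}
and then unfolding the definition of $\phi$ and $\nabla\phi$ recovers exactly \eqref{eq:smooth+cvx}.

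For the reverse direction, assume that \eqref{eq:smooth+cvx} holds for all $\vx,\vy$. Since the left-hand side of \eqref{eq:smooth+cvx} is nonnegative, the inequality immediately gives $f(\vy) \geq f(\vx) + \innp{\nabla f(\vx), \vy - \vx}$, which is the first-order characterization of convexity. To derive $L$-smoothness, I would symmetrize by writing \eqref{eq:smooth+cvx} with $\vx$ and $\vy$ swapped and adding the two inequalities; the function values cancel, leaving the co-coercivity-type bound
\begin{equation*}
\tfrac{1}{L}\|\nabla f(\vy) - \nabla f(\vx)\|^2 \;\leq\; \innp{\nabla f(\vy) - \nabla f(\vx), \vy - \vx}.
\end{equation*}
Cauchy--Schwarz applied to the right-hand side and dividing through by $\|\nabla f(\vy)-\nabla f(\vx)\|$ (handling the degenerate case separately) then yields $\|\nabla f(\vy) - \nabla f(\vx)\| \leq L\|\vy - \vx\|$, completing the proof.

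There is no real obstacle here; the only subtle step is the forward direction, where one must recognize that the right object to apply the smoothness upper bound to is the shifted function $\phi$ rather than $f$ itself, together with the choice of evaluation point $\vy - \tfrac{1}{L}\nabla\phi(\vy)$, which is precisely one step of gradient descent on $\phi$ from $\vy$ and thus makes the most progress toward the minimizer $\vx$ of $\phi$ that the smoothness inequality can certify in a single step.
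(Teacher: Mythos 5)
Your proof is correct. The paper does not prove Fact~\ref{fact:smooth+convex} itself; it simply defers to \cite[Section 3.5]{Zalinescu:2002}, and your argument --- shifting to $\phi(\vz) \defeq f(\vz) - \innp{\nabla f(\vx), \vz}$, noting that it is convex, $L$-smooth, and minimized at $\vx$, then applying the descent lemma~\eqref{eq:smooth-ub} at the gradient step $\vy - \frac{1}{L}\nabla\phi(\vy)$ for one direction; and extracting first-order convexity from the nonnegativity of the left-hand side, symmetrizing to cocoercivity, and invoking Cauchy--Schwarz for the other --- is precisely the standard textbook derivation that such a reference would give. All steps, including the degenerate case $\nabla f(\vy) = \nabla f(\vx)$, are handled soundly.
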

Observe that  Fact~\ref{fact:smooth+convex} fully characterizes the class of smooth convex functions, and, as such, should be sufficient for analyzing any algorithm that addresses problems from this class. 
An immediate consequence of Fact~\ref{fact:smooth+convex} is that the gradient of a smooth convex function is cocoercive, i.e.,
\begin{equation}
    \innp{\nabla f(\vx) - \nabla f(\vy), \vx - \vy} \geq \frac{1}{L}\|\vx - \vy\|^2.
\end{equation}

\paragraph{Min-max optimization.} In the second setup, we are given oracle access to gradients of a function $\phi: \rr^{d_1}\times \rr^{d_2}\to \rr$, where $d_1 + d_2 = d.$ Function $\phi(\vx, \vy)$ is assumed to be convex-concave: convex in the first argument ($\vx$) when the second argument ($\vy$) is fixed and concave in the second argument ($\vy$) when the first argument ($\vx$) is fixed, for any values of $\vx, \vy$. Similar to the case of convex optimization, the goal in this case is, given $\epsilon > 0,$ to find a pair of points $(\vx, \vy) \in \rr^{d_1}\times \rr^{d_2}$ such that $\|\nabla \phi(\vx, \vy)\| \leq \epsilon$ in as few iterations (oracle queries to the gradient of $\phi$) as possible.

We consider the problem of minimizing the norm of the gradient of $\phi$ as the problem of minimizing the norm of the operator $F(\vu) = \big[\subalign{\nabla_{\vx}\phi(\vx, \vy)\\ -\nabla_{\vy} \phi(\vx, \vy)}\big],$ where $\vu = \big[\subalign{\vx \\ \vy}\big].$ When $\phi$ is convex-concave, $F$ is monotone, i.e., it holds
\begin{equation}\label{eq:mon-op}
    (\forall \vu, \vv \in\rr^d):\quad \innp{F(\vu) - F(\vv), \vu - \vv} \geq 0. 
\end{equation}
We will assume throughout that $F$ is $\frac{1}{L}$-cocoercive, i.e., that
\begin{equation}\label{eq:cocoercive-op}
    (\forall \vu, \vv \in\rr^d):\quad \innp{F(\vu) - F(\vv), \vu - \vv} \geq \frac{1}{L}\|F(\vu) - F(\vv)\|^2.
\end{equation}
Cocoercivity of $F$ implies that it is monotone and $L$-Lipschitz. The opposite does not hold in general, unless $F$ is the gradient of a smooth convex function (as we saw in the case of convex optimization described earlier). Nevertheless, cocoercivity is sufficient to capture the main algorithmic ideas of smooth min-max optimization, and the extensions to general smooth min-max optimization are possible through the use of approximate resolvent operators (see, e.g.,~\cite{diakonikolas2020halpern}). Further, it suffices to consider unconstrained problems, as extensions to constrained optimization problems are possible in a straightforward manner using a notion of \emph{operator mapping} (see, e.g.,~\cite{diakonikolas2020halpern}, where a similar idea was used). 

We assume here that there exists a point $\vu^* \in \rr^d$ such that $F(\vu^*) = 0.$ Due to cocoercivity of $F$ (Eq.~\eqref{eq:cocoercive-op}), this assumption implies that
\begin{equation}\label{eq:minty-gap}
   (\forall \vu \in \rr^d):\quad \innp{F(\vu), \vu - \vu^*} \geq \frac{1}{L}\|F(\vu)\|^2. 
\end{equation}
It will be useful to think of $\innp{F(\vu), \vu - \vu^*}$ as a notion of ``optimality gap'' for min-max optimization problems, as, using convexity-concavity of $\phi$, we have
$$
    (\forall (\vx, \vy) \in \rr^{d_1}\times \rr^{d_2}):\quad \phi(\vx, \vy^*) - \phi(\vx^*, \vy^*) + \phi(\vx^*, \vy^*) - \phi(\vx^*, \vy) \leq \innp{F(\vu), \vu - \vu^*}. 
$$

%%%%%%%%%%%%%%%%%%%%%%%%%
\section{Small Gradients in Convex Optimization}\label{sec:cvx-opt}
In this section, we consider the problem of minimizing  norm of the gradient of a smooth convex function. We show that all standard methods, including standard gradient descent, fast gradient method of Nesterov~\cite{nesterov1983method}, and the optimized gradient method of Kim and Fessler~\cite{kim2020optimizing}, can be captured within an intuitive potential function-based framework, where the progress of a method is established through a trade-off between the norm of the gradient and the optimality gap. Further, the complete convergence analysis of each of the methods can be fully carried out using only the cocoercivity inequality from Eq.~\eqref{eq:smooth+cvx}, which fully characterizes the class of smooth convex functions.  
\subsection{Gradient Descent}
As a warmup, we start by considering $L$-smooth but possibly nonconvex objectives $f.$ In this case, all that can be said about $f$ is that its gradients are $L$-Lipschitz, which implies Eq.~\eqref{eq:smooth-ub}. Further, for any method that does not converge to local maxima (unless initialized at one), we cannot hope to bound the norm of the last gradient -- all that we can hope for is the average or the minimum over all seen gradients. The simplest way to see this is by considering the one dimensional case: if the function is locally concave and the algorithm moves in the direction that reduces the function value, the absolute value of the function derivative must increase. 

Thus, assuming that the function is bounded below by some $f_{\star}>-\infty$, it is natural to consider methods that in each iteration either reduce the function value or the norm of the gradient. Such methods ensure that, $\forall k \geq 0:$
$$  
    a_k \|\nabla f(\vx_k)\|^2 + f(\vx_{k+1}) - f(\vx_k) \leq 0, 
$$
or, equivalently, that the following potential function
\begin{equation}
    \cc_k = \sum_{i=0}^k a_i \|\nabla f(\vx_i)\|^2 + f(\vx_{i+1})
\end{equation}
is non-increasing, where $a_i$ is some sequence of positive numbers. Equivalently, such methods ensure that $a_k \|\nabla f(\vx_k)\|^2 + f(\vx_{k+1}) - f(\vx_k) \leq 0,$ $\forall k \geq 0.$ 

As the only assumption we are making about $f$ is that it is $L$-smooth, the most we can do to bound $f(\vx_{k+1}) - f(\vx_k)$ is use Eq.~\eqref{eq:smooth-ub}. The tightest bound on $f(\vx_{k+1}) - f(\vx_k)$ that can be obtained from Eq.~\eqref{eq:smooth-ub} is attained when $\vx_{k+1} = \vx_k - \frac{1}{L}\nabla f(\vx_k)$ (i.e., for the standard gradient descent step) and is given by $f(\vx_{k+1}) - f(\vx_k) \leq - \frac{1}{2L}\|\nabla f(\vx_k)\|^2$, in which case the largest $a_k$ we can choose is $a_k = \frac{1}{2L}.$ As $\cc_k$ is non-increasing, it follows that $\cc_k \leq \cc_0,$ and we recover the familiar convergence bound of gradient descent:
\begin{equation}\label{eq:ncvx-gd-conv-bnd}
    \frac{1}{k+1}\sum_{i=0}^k \|\nabla f(\vx_i)\|^2 \leq \frac{2L(f(\vx_0) - f(\vx_{k+1}))}{k+1} \leq \frac{2L(f(\vx_0) - f_{\star})}{k+1}.
\end{equation}

When considering the case of a convex objective function $f,$ the first question to ask is how would convexity help to improve the bound from Eq.~\eqref{eq:ncvx-gd-conv-bnd}. The first observation to make is that Fact~\ref{fact:smooth+convex} fully characterizes the class of smooth convex functions, and, thus, Eq.~\eqref{eq:smooth+cvx} should be enough to carry out the analysis of any algorithm for smooth convex functions. 

Given that the function is convex, in this case it seems reasonable to hope that we can obtain a bound on the gradient norm at the last iterate. Thus, we could consider a potential function of the form
$$
    \cc_k = A_k \|\nabla f(\vx_k)\|^2 + f(\vx_k)
$$
and try enforcing the condition that $\cc_k \leq \cc_{k-1}$ for $A_k$ that grows as fast as possible with the iteration count $k.$  This approach precisely gives the bound $\|\nabla f(\vx_k)\|^2 \leq \frac{2L (f(\vx_0) - f(\vx^*))}{2k + 1},$ which is tight (see, e.g.,~\cite[Lemma~5.2]{kim2020optimizing}).

\begin{lemma}[Convergence of Gradient Descent]\label{lem:gd-cvx}
Let $f:\rr^d \to \rr$ be an $L$-smooth function that attains its minimum on $\rr^d$ and let $\vx^* \in \argmin_{\vx \in \rr^d}f(\vx)$. Let $\vx_0 \in \rr^d$ be an arbitrary initial point and assume that the sequence $\{\vx_k\}_{k \geq 0}$ evolves according to the standard gradient descent, i.e., $\vx_{k+1} = \vx_k - \frac{1}{L}\nabla f(\vx_k),$ $\forall k \geq 0$. Then
$$
    \cc_k = \frac{k}{L}\|\nabla f(\vx_k)\|^2 + f(\vx_k)
$$
is non-increasing with $k,$ and we can conclude that, $\forall k \geq 0$
$$
\|\nabla f(\vx_k)\|^2 \leq \frac{2L (f(\vx_0) - f(\vx^*))}{2k + 1}.
$$
\end{lemma}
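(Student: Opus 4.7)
The plan is to verify monotonicity of the potential $\cc_k$ by a direct application of Fact~\ref{fact:smooth+convex}, and then to extract the stated iteration bound by combining $\cc_k \leq \cc_0$ with inequality~\eqref{eq:grad-norm-to-opt-gap} applied at the current iterate. Since Fact~\ref{fact:smooth+convex} is a full characterization of the class of smooth convex functions, it should suffice for the entire argument; it will be invoked in \emph{both} orderings $(\vx,\vy)=(\vx_{k+1},\vx_k)$ and $(\vx,\vy)=(\vx_k,\vx_{k+1})$, with the first controlling the change in $f$ along a gradient step and the second (equivalent to cocoercivity of $\nabla f$) certifying that the gradient norm is nonincreasing.

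Concretely, first I would instantiate Fact~\ref{fact:smooth+convex} at $(\vx,\vy)=(\vx_{k+1},\vx_k)$, substitute the gradient descent step $\vx_{k+1}-\vx_k=-\frac{1}{L}\nabla f(\vx_k)$ into the resulting inner product, and expand $\|\nabla f(\vx_{k+1})-\nabla f(\vx_k)\|^2$. After the cross terms involving $\innp{\nabla f(\vx_{k+1}),\nabla f(\vx_k)}$ cancel, the bound on $\cc_{k+1}-\cc_k$ takes the clean form
\begin{equation*}
    \cc_{k+1} - \cc_k \leq \frac{2k+1}{2L}\bigl(\|\nabla f(\vx_{k+1})\|^2 - \|\nabla f(\vx_k)\|^2\bigr).
\end{equation*}
Next I would close this residual by invoking cocoercivity of $\nabla f$: substituting the gradient descent step into $\innp{\nabla f(\vx_{k+1})-\nabla f(\vx_k), \vx_{k+1}-\vx_k}\geq \frac{1}{L}\|\nabla f(\vx_{k+1})-\nabla f(\vx_k)\|^2$ and rearranging yields $\innp{\nabla f(\vx_{k+1}),\nabla f(\vx_k)}\geq \|\nabla f(\vx_{k+1})\|^2$, so Cauchy--Schwarz delivers $\|\nabla f(\vx_{k+1})\|\leq \|\nabla f(\vx_k)\|$ and hence $\cc_{k+1}\leq \cc_k$.

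For the final bound, monotonicity telescopes to $\cc_k \leq \cc_0 = f(\vx_0)$. A naive rearrangement would only produce $\|\nabla f(\vx_k)\|^2\leq L(f(\vx_0)-f(\vx^*))/k$; to sharpen the constant from $\frac{1}{k}$ to $\frac{2}{2k+1}$, I would plug $f(\vx_k)\geq f(\vx^*)+\frac{1}{2L}\|\nabla f(\vx_k)\|^2$ from~\eqref{eq:grad-norm-to-opt-gap} into $\cc_k \leq f(\vx_0)$ and collect the $\|\nabla f(\vx_k)\|^2$ coefficients. The main obstacle I anticipate is the first computation: the two orderings of Fact~\ref{fact:smooth+convex} play genuinely different roles, and using only the smoothness upper bound~\eqref{eq:smooth-ub} (which discards convexity) or only one ordering of~\eqref{eq:smooth+cvx} (which leaves the residual $\frac{2k+1}{2L}(\|\nabla f(\vx_{k+1})\|^2-\|\nabla f(\vx_k)\|^2)$ uncontrolled) is insufficient, so recognizing that both orderings are simultaneously needed and that the residual is settled precisely by nonexpansiveness of the gradient along the gradient descent step is the key observation.
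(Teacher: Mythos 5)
Your proposal is correct and follows essentially the same route as the paper's proof: bound $\cc_{k+1}-\cc_k$ via Fact~\ref{fact:smooth+convex} applied at $(\vx_{k+1},\vx_k)$ to get the residual $\frac{2k+1}{2L}\bigl(\|\nabla f(\vx_{k+1})\|^2-\|\nabla f(\vx_k)\|^2\bigr)$, close it by cocoercivity of $\nabla f$ plus Cauchy--Schwarz to obtain gradient-norm monotonicity, then telescope $\cc_k\le\cc_0$ and sharpen via $f(\vx_k)-f(\vx^*)\ge\frac{1}{2L}\|\nabla f(\vx_k)\|^2$. The only small imprecision is calling cocoercivity "equivalent to" the single reversed ordering of Fact~\ref{fact:smooth+convex}; it is in fact the sum of both orderings, but this has no bearing on the validity of the argument.
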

\begin{proof}
We start by showing that $\cc_{k+1}\leq \cc_k,$ $\forall k \geq 0.$ By the definition of $\cc_k,$
$$
    \cc_{k+1} - \cc_k \leq \frac{k+1}{L}\|\nabla f(\vx_{k+1})\|^2 - \frac{k}{L}\|\nabla f(\vx_k)\|^2 + f(\vx_{k+1}) - f(\vx_k).
$$
Applying Fact~\ref{fact:smooth+convex} with $\vx = \vx_{k+1} = \vx_k - \frac{1}{L}\nabla f(\vx_k)$ and $\vy = \vx_k$, it follows that $f(\vx_{k+1}) - f(\vx_k) \leq -\frac{1}{2L}\|\nabla f(\vx_{k+1})\|^2 - \frac{1}{2L}\|\nabla f(\vx_k)\|^2,$ and, thus
$$
    \cc_{k+1} - \cc_k \leq  \frac{2k+1}{2L}\|\nabla f(\vx_{k+1})\|^2 - \frac{2k + 1}{2L}\|\nabla f(\vx_k)\|^2.
$$
To complete the proof that $\cc_{k+1}\leq \cc_k,$ it remains to argue that $\|\nabla f(\vx_{k+1})\| \leq \|\nabla f(\vx_k)\|,$ $\forall k \geq 0.$  This is clearly true if $\|\nabla f(\vx_{k+1})\| = 0,$ so assume $\|\nabla f(\vx_{k+1})\| \neq 0$. Applying Eq.~\eqref{eq:cocoercive-op} with $\vx = \vx_{k+1} = \vx_k - \frac{1}{L} \nabla f(\vx_k)$, $\vy = \vx_k$, and simplifying, it follows that:
$$
    \|\nabla f(\vx_{k+1})\|^2 \leq \innp{\nabla f(\vx_{k+1}), \nabla f(\vx_k)} \leq \|\nabla f(\vx_{k+1})\| \|\nabla f(\vx_k)\|,
$$
where the last inequality is by Cauchy-Schwarz. To conclude that $\|\nabla f(\vx_{k+1})\| \leq \|\nabla f(\vx_k)\|,$ it remains to divide both sides of the last inequality by $\|\nabla f(\vx_{k+1})\|.$

%For the last part, the bound for $k = 0$ follows by the standard descent lemma, so let us assume that $k \geq 1.$ 
From the first part of the proof, it follows that $\cc_k \leq \cc_0,$ and, thus
$$
    \frac{k}{L}\|\nabla f(\vx_k)\|^2 \leq f(\vx_0) - f(\vx_k) = f(\vx_0) - f(\vx^*) + f(\vx^*) - f(\vx_k).
$$
It remains to observe that $f(\vx^*) - f(\vx_k) \leq - \frac{1}{2L}\|\nabla f(\vx_k)\|^2,$ which follows by applying Fact~\ref{fact:smooth+convex} with $\vx = \vx^*,$ $\vy = \vx_k$, and rearrange.
\end{proof}

%%%%%%%%%%%%%%
\subsection{Methods that are Faster than Gradient Descent}

The potential functions we have seen so far (for gradient descent) trade off the gradient norm (squared) with the function value. Equivalently, we can view them as trading off the gradient norm with the optimality gap $f(\vx_k) - f(\vx^*),$ as $f(\vx^*)$ would cancel out in the analysis and the same argument would go through.

It is reasonable to ask whether we can obtain faster algorithms by using a different trade off, say, by considering potential functions of the form $\cc_k = A_k \|\nabla f(\vx_k)\|^2 + B_k (f(\vx_k) - f(\vx^*))$ or $\cc_k = \sum_{i=0}^k a_i \|\nabla f(\vx_i)\|^2 + B_k (f(\vx_k) - f(\vx^*)),$ where $B_k$ is some positive function of the iteration count $k$. 

%It turns out that it is not clear how to do this 
Observe that for non-constant $B_k,$ one way or another, we would need to account for $\vx^*,$ which is not known to the algorithm. However, there are at least two ways around this issue. The first one is to utilize Eq.~\eqref{eq:smooth+cvx} to bound below $f(\vx^*)$. %replace the true optimality gap $f(\vx_k) - f(\vx^*)$ with an estimate that bounds below $f(\vx^*)$. 
This approach does not lead to the optimal iteration complexity, but improves the overall bound compared to gradient descent and recovers a variant of Nesterov FGM. The second approach is to replace the optimality gap with a gap to some reference point. In particular, as we show below, optimized gradient method~\cite{kim2020optimizing} can be viewed as using the final point of the algorithm $\vx_N$ as the reference (or anchor) point. %We will later see that in the case of min-max optimization, Halpern iteration uses the initial point $\vx_0$ as the reference point. 

%%%%%%%%%%%%%%%
\subsubsection{Fast Gradient Method}

We start by considering a potential function that offers a different trade-off between the norm of the gradient and the optimality gap, defined by
%%%%%%%%%%%%%%%%%%%%%%%%%%%%
\begin{equation}\label{eq:C_k_FGM}
    \cc_k = \sum_{i=0}^{k-1} a_i \|\nabla f(\vx_i)\|^2 + B_k(f(\vx_k) - f(\vx^*)),
\end{equation}
where $a_i > 0,$ $\forall i \geq 0$ and the sequence of scalars $B_k > 0$, $\forall k \geq 0$, is strictly increasing. We also define $b_k = B_k - B_{k-1} > 0.$ By convention, the summation from $i$ to $j$ where $j < i$ is taken to be zero. Observe that
\begin{equation}\label{eq:FGM-C-0}
    \cc_0 = B_0 (f(\vx_0) - f(\vx^*)).
\end{equation}

While, in principle, one could also consider $\cc_k =  A_k \|\nabla f(\vx_k)\|^2 + B_k(f(\vx_k) - f(\vx^*))$ hoping to obtain a bound on the last gradient, it is not clear that such a bound is even possible for non-constant $B_k$ (see Section~\ref{sec:discussion}).% (moving away from gradient descent), as discussed at the end of this subsection. 

We first show that there is a natural algorithm that ensures $\cc_{k+1} - \cc_k \leq E_k,$ $\forall k \geq 0$, where $E_k$ contains only telescoping terms. As it turns out, this algorithm is precisely Nesterov FGM.
\begin{lemma}\label{lemma:fgm-ninc-c-k}
Given an arbitrary initial point $\vx_0 \in \rr^d$, assume that for $k \geq 1,$ the sequence $\vx_k$ is updated as
\begin{equation}\label{eq:Nesterov-FGM}
    \begin{gathered}
        \vx_{k} = \frac{B_{k-1}}{B_k}\Big(\vx_{k-1} - \frac{1}{L}\nabla f(\vx_{k-1})\Big) + \frac{b_k}{B_k}\vv_k,
    \end{gathered}
\end{equation}
where $\vv_k$ is defined recursively via $\vv_{k} = \vv_{k-1} - \frac{b_{k-1}}{L}\nabla f(\vx_{k-1})$ with $\vv_0 = \vx_0$. If ${b_k}^2 \leq B_k$ and $a_{k-1} \leq \frac{B_{k-1}}{2L},$ then  
 $\cc_{k} - \cc_{k-1} \leq \frac{L}{2}\big(\|\vx^* - \vv_k\|^2 - \|\vx^* - \vv_{k+1}\|^2\big),$ $\forall k \geq 1,$ where $\cc_k$ is defined by Eq.~\eqref{eq:C_k_FGM}.
\end{lemma}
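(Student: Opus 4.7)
The plan is to directly compute $\cc_k - \cc_{k-1}$ and show that, after a sequence of standard bookkeeping steps, it reduces to a non-positive quadratic in the gradients plus the desired telescoping term in $\|\vx^* - \vv_k\|^2$. Expanding by the definition of $\cc_k$ in Eq.~\eqref{eq:C_k_FGM} and regrouping,
\begin{equation*}
  \cc_k - \cc_{k-1} = a_{k-1}\|\nabla f(\vx_{k-1})\|^2 + B_{k-1}\bigl(f(\vx_k) - f(\vx_{k-1})\bigr) + b_k\bigl(f(\vx_k) - f(\vx^*)\bigr).
\end{equation*}
The two function-value differences are then bounded using Fact~\ref{fact:smooth+convex} applied with the evaluation point $\vx = \vx_k$: once with $\vy = \vx_{k-1}$, yielding $f(\vx_k) - f(\vx_{k-1}) \leq \innp{\nabla f(\vx_k), \vx_k - \vx_{k-1}} - \tfrac{1}{2L}\|\nabla f(\vx_k) - \nabla f(\vx_{k-1})\|^2$, and once with $\vy = \vx^*$ (using $\nabla f(\vx^*) = 0$), yielding $f(\vx_k) - f(\vx^*) \leq \innp{\nabla f(\vx_k), \vx_k - \vx^*} - \tfrac{1}{2L}\|\nabla f(\vx_k)\|^2$.

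The next step, which I expect to be the main obstacle, is to use the algorithmic definition of $\vx_k$ to simplify the inner-product terms. Multiplying out Eq.~\eqref{eq:Nesterov-FGM} gives $B_k\vx_k - B_{k-1}\vx_{k-1} = -\tfrac{B_{k-1}}{L}\nabla f(\vx_{k-1}) + b_k\vv_k$, so combining the two inner products from the previous step produces
\begin{equation*}
  B_{k-1}\innp{\nabla f(\vx_k), \vx_k - \vx_{k-1}} + b_k\innp{\nabla f(\vx_k), \vx_k - \vx^*} = -\tfrac{B_{k-1}}{L}\innp{\nabla f(\vx_k), \nabla f(\vx_{k-1})} + b_k\innp{\nabla f(\vx_k), \vv_k - \vx^*}.
\end{equation*}
Now expand the negative-squared term $-\tfrac{B_{k-1}}{2L}\|\nabla f(\vx_k) - \nabla f(\vx_{k-1})\|^2$; its cross product precisely cancels the $-\tfrac{B_{k-1}}{L}\innp{\nabla f(\vx_k), \nabla f(\vx_{k-1})}$ term above, leaving $-\tfrac{B_{k-1}}{2L}\|\nabla f(\vx_k)\|^2 - \tfrac{B_{k-1}}{2L}\|\nabla f(\vx_{k-1})\|^2$. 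The hypothesis $a_{k-1} \leq \tfrac{B_{k-1}}{2L}$ then kills the $a_{k-1}\|\nabla f(\vx_{k-1})\|^2$ contribution, and merging the remaining squared-gradient coefficients via $\tfrac{B_{k-1}}{2L} + \tfrac{b_k}{2L} = \tfrac{B_k}{2L}$ collapses the bound to $\cc_k - \cc_{k-1} \leq b_k\innp{\nabla f(\vx_k), \vv_k - \vx^*} - \tfrac{B_k}{2L}\|\nabla f(\vx_k)\|^2$.

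To close, the mirror-descent-style identity $\nabla f(\vx_k) = \tfrac{L}{b_k}(\vv_k - \vv_{k+1})$ (from the recursion defining $\vv_{k+1}$) together with the three-point identity $2\innp{\vv_k - \vv_{k+1}, \vv_k - \vx^*} = \|\vv_k - \vv_{k+1}\|^2 + \|\vx^* - \vv_k\|^2 - \|\vx^* - \vv_{k+1}\|^2$ converts $b_k\innp{\nabla f(\vx_k), \vv_k - \vx^*}$ into $\tfrac{b_k^2}{2L}\|\nabla f(\vx_k)\|^2 + \tfrac{L}{2}(\|\vx^* - \vv_k\|^2 - \|\vx^* - \vv_{k+1}\|^2)$. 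Combining with the $-\tfrac{B_k}{2L}\|\nabla f(\vx_k)\|^2$ term leaves a coefficient $\tfrac{b_k^2 - B_k}{2L} \leq 0$ on $\|\nabla f(\vx_k)\|^2$ by the hypothesis $b_k^2 \leq B_k$, and the remaining terms are exactly the claimed telescoping bound. The conceptual difficulty, as indicated above, is not the mirror-descent step (which is standard) but the precise choice to evaluate Fact~\ref{fact:smooth+convex} at $\vx_k$ rather than $\vx_{k-1}$, so that the algorithmic identity for $B_k\vx_k - B_{k-1}\vx_{k-1}$ aligns the inner products with the gradient-norm squared terms generated by cocoercivity.
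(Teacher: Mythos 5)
Your proposal is correct and follows essentially the same route as the paper's proof: both bound $f(\vx_k)-f(\vx_{k-1})$ and $f(\vx_k)-f(\vx^*)$ via Fact~\ref{fact:smooth+convex} evaluated at $\vx_k$, use the algorithmic identity for $B_k\vx_k - B_{k-1}\vx_{k-1}$ to align the inner products, and finish with the mirror-descent/three-point identity applied to $\vv_{k+1}=\vv_k-\tfrac{b_k}{L}\nabla f(\vx_k)$. The intermediate regrouping differs cosmetically (the paper expands $\|\nabla f(\vx_k)-\nabla f(\vx_{k-1})\|^2$ immediately, you defer it), but the decomposition, the key inequalities invoked, and the way the hypotheses $a_{k-1}\leq \tfrac{B_{k-1}}{2L}$ and $b_k^2\leq B_k$ close the argument are identical.
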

\begin{proof}
Given $k \geq 1,$ by definition of $\cc_k,$ we have
\begin{equation}\label{eq:FGM-change-in-ck-1}
    \cc_k - \cc_{k-1} = a_{k-1} \|\nabla f(\vx_{k-1})\|^2 + B_k f(\vx_k) - B_{k-1}f(\vx_{k-1}) - b_k f(\vx^*). 
\end{equation}
Since $f(\vx^*)$ is not known to the algorithm and we are trying to bound $\cc_k - \cc_{k-1}$ above, it appears natural to use Eq.~\eqref{eq:smooth+cvx} to bound $f(\vx^*)$ below. In particular, we have:
\begin{equation}\label{eq:FGM-bnd-fx^*-below}
    f(\vx^*) \geq f(\vx_k) + \innp{\nabla f(\vx_k), \vx^* - \vx_k} + \frac{1}{2L}\|\nabla f(\vx_k)\|^2.  
\end{equation}
On the other hand, the difference $f(\vx_k) - f(\vx_{k-1})$ can be bounded above using, again, Eq.~\eqref{eq:smooth+cvx}, as follows.
\begin{equation}\label{eq:FGM-f-k-k-1}
\begin{aligned}
    f(\vx_{k}) - f(\vx_{k-1}) \leq\;& \innp{\nabla f(\vx_k), \vx_k - \vx_{k-1} + \frac{1}{L}\nabla f(\vx_{k-1})}\\
    &- \frac{1}{2L}\|\nabla f(\vx_k)\|^2 - \frac{1}{2L}\|\nabla f(\vx_{k-1})\|^2.
\end{aligned}
\end{equation}
Combining Eq.~\eqref{eq:FGM-bnd-fx^*-below} and Eq.~\eqref{eq:FGM-f-k-k-1} with Eq.~\eqref{eq:FGM-change-in-ck-1}, we have:
\begin{equation}\label{eq:FGM-change-in-ck-2}
    \begin{aligned}
        \cc_k - \cc_{k-1} \leq\;& - \frac{B_k}{2L}\|\nabla f(\vx_k)\|^2  + \Big(a_{k-1} - \frac{B_{k-1}}{2L}\Big)\|\nabla f(\vx_{k-1})\|^2\\ 
        &+ B_{k-1}\innp{\nabla f(\vx_k), \vx_k - \vx_{k-1} + \frac{1}{L}\nabla f(\vx_{k-1})}
        + b_k \innp{\nabla f(\vx_k), \vx_k - \vx^*}.
    \end{aligned}
\end{equation}
Now, if $b_k$ were zero (constant $B_k$), we could simply set $\vx_k = \vx_{k-1} - \frac{1}{L}\nabla f(\vx_{k-1}),$ and we would be recovering gradient descent and its analysis from the previous subsection. Of course, the goal here is to get a different trade off, where $B_k$ is strictly increasing. 

To get a useful bound on $\cc_k - \cc_{k-1},$ we need to be able to bound or otherwise control the term $b_k \innp{\nabla f(\vx_k), \vx_k - \vx^*}.$ Fortunately, such a term frequently appears in the mirror-descent-type analysis, and it can be bounded using standard arguments by defining
\begin{align*}
    \vv_{k+1} &= \argmin_{\vu \in \rr^d}\Big\{b_k \innp{\nabla f(\vx_k), \vu - \vv_k} + \frac{L}{2}\|\vu - \vv_k\|^2\Big\}\\
    & = \vv_k - \frac{b_k}{L}\nabla f(\vx_k).
\end{align*}
Then, we have:
\begin{align*}
    b_k \innp{\nabla f(\vx_k), \vx_k - \vx^*} =\; & b_k \innp{\nabla f(\vx_k), \vx_k - \vv_{k+1}} + L\innp{\vv_k - \vv_{k+1}, \vv_{k+1} - \vx^*}\\
    =\;& b_k\innp{\nabla f(\vx_k), \vx_k - \vv_k} + \frac{{b_k}^2}{L}\|\nabla f(\vx_k)\|^2 \\
    &+ \frac{L}{2}\|\vx^* - \vv_k\|^2 - \frac{L}{2}\|\vx^* - \vv_{k+1}\|^2 - \frac{L}{2}\|\vv_{k+1} - \vv_k\|^2\\
    =\;& b_k\innp{\nabla f(\vx_k), \vx_k - \vv_k} + \frac{{b_k}^2}{2L}\|\nabla f(\vx_k)\|^2\\
    &+ \frac{L}{2}\|\vx^* - \vv_k\|^2 - \frac{L}{2}\|\vx^* - \vv_{k+1}\|^2,
\end{align*}
where we have repeatedly used $\vv_{k+1} = \vv_k - \frac{b_k}{L}\nabla f(\vx_k).$ Combining with Eq.~\eqref{eq:FGM-change-in-ck-2}, we have
\begin{align*}
    \cc_k - \cc_{k-1} \leq\;& \frac{{b_k}^2 - B_k}{2L}\|\nabla f(\vx_k)\|^2  + \Big(a_{k-1} - \frac{B_{k-1}}{2L}\Big)\|\nabla f(\vx_{k-1})\|^2\\
    &+ \frac{L}{2}\|\vx^* - \vv_k\|^2 - \frac{L}{2}\|\vx^* - \vv_{k+1}\|^2\\ 
        &+ \innp{\nabla f(\vx_k), B_k\vx_k - B_{k-1}\Big(\vx_{k-1} - \frac{1}{L}\nabla f(\vx_{k-1})\Big) - b_k \vv_k}.
\end{align*}
To obtain $\cc_k - \cc_{k-1} \leq \frac{L}{2}\|\vx^* - \vv_k\|^2 - \frac{L}{2}\|\vx^* - \vv_{k+1}\|^2,$ it remains to choose ${b_k}^2 \leq B_k,$ $a_{k-1} \leq \frac{B_{k-1}}{2L}$, and $\vx_k = \frac{B_{k-1}}{B_k}\big(\vx_{k-1} - \frac{1}{L}\nabla f(\vx_{k-1})\big) + \frac{b_k}{B_k}\vv_k.$
\end{proof}

We can now use Lemma~\ref{lemma:fgm-ninc-c-k} to argue about convergence of Nesterov FGM from Eq.~\eqref{eq:Nesterov-FGM}. Interestingly, the result from Lemma~\ref{lemma:fgm-ninc-c-k} suffices to argue about both convergence in function value and in norm of the gradient. The resulting bounds are tight, up to a small absolute constant, due to numerical results from~\cite{kim2018generalizing}.
\begin{theorem}[Convergence of Fast Gradient Method]\label{thm:FGM-conv}
Suppose that the assumptions of Lemma~\ref{lemma:fgm-ninc-c-k} hold, where $\vv_0 = \vx_0.$ Then, $\forall k \geq 1$:
\begin{equation*}
    f(\vx_k) - f(\vx^*) \leq \frac{2B_0(f(\vx_0) - f(\vx^*)) + L\|\vx_0 - \vx^*\|^2}{2B_k} 
\end{equation*}
and
\begin{equation*}
    \sum_{i=0}^{k} \frac{B_i}{2L} \|\nabla f(\vx_i)\|^2 \leq  B_0(f(\vx_0) - f(\vx^*)) + \frac{L}{2}\|\vx_0 - \vx^*\|^2.
\end{equation*}
In particular, if $b_0 = B_0,$ ${b_k}^2 = B_k$  for $k \geq 1$, and $a_k = \frac{B_{k}}{2L}$, then
\begin{equation*}
    f(\vx_k) - f(\vx^*) \leq \frac{4 L\|\vx_0 - \vx^*\|^2}{(k+1)(k+2)}
\end{equation*}
and
\begin{equation*}
    \min_{0\leq i \leq k}\|\nabla f(\vx_i)\|^2 \leq \frac{\sum_{i=0}^{k} {B_i} \|\nabla f(\vx_i)\|^2}{\sum_{i=0}^k B_i} \leq  \frac{18 L^2 \|\vx_0 - \vx^*\|^2}{(k+1)(k+2)(k+3)}.
\end{equation*}
\end{theorem}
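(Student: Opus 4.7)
The plan is to derive the two general-form bounds by telescoping the one-step inequality in Lemma~\ref{lemma:fgm-ninc-c-k} and then reading them off the definition of $\cc_k$; the specialized bounds then follow from an induction on $B_k$ together with a standard summation identity.

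First, I would sum the lemma's bound $\cc_k - \cc_{k-1} \leq \frac{L}{2}(\|\vx^* - \vv_k\|^2 - \|\vx^* - \vv_{k+1}\|^2)$ from $k=1$ to the iteration of interest, telescoping to $\cc_k \leq \cc_0 + \frac{L}{2}\|\vx^* - \vv_1\|^2$ with $\cc_0 = B_0(f(\vx_0) - f(\vx^*))$. Since $\vv_0 = \vx_0$ and $\vv_1 = \vx_0 - \frac{b_0}{L}\nabla f(\vx_0)$, I would expand the squared norm and invoke convexity $\innp{\nabla f(\vx_0), \vx^* - \vx_0} \leq f(\vx^*) - f(\vx_0)$ together with the smoothness consequence $\|\nabla f(\vx_0)\|^2 \leq 2L(f(\vx_0) - f(\vx^*))$ to collapse the extra term. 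This yields the master bound $\cc_k \leq B_0(f(\vx_0) - f(\vx^*)) + \frac{L}{2}\|\vx_0 - \vx^*\|^2$. From here, the two general-form displays fall out by selectively discarding nonnegative terms in $\cc_k = \sum_{i=0}^{k-1} a_i \|\nabla f(\vx_i)\|^2 + B_k(f(\vx_k) - f(\vx^*))$: dropping the gradient sum yields the function-value bound, while applying the master inequality to $\cc_{k+1}$ and dropping $B_{k+1}(f(\vx_{k+1}) - f(\vx^*)) \geq 0$ (valid because $\vx^*$ minimizes $f$), together with the choice $a_i = B_i/(2L)$, yields the gradient-sum bound.

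For the specialized parameter choice (taking $B_0 = 1$, so that $b_0 = 1$), the main task is an induction showing $B_k \geq (k+1)(k+2)/4$. The recursion $b_k^2 = B_k = B_{k-1} + b_k$ rearranges to $b_k = (1+\sqrt{1+4B_{k-1}})/2$, and the inductive step closes via the elementary estimate $(k + 3/2)^2 \leq 1 + (k+1)(k+2)$, which reduces to $9/4 \leq 3$. Combining this lower bound with the smoothness estimate $f(\vx_0) - f(\vx^*) \leq \frac{L}{2}\|\vx_0 - \vx^*\|^2$ inside the function-value display gives the stated $O(L\|\vx_0-\vx^*\|^2/k^2)$ rate. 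The minimum-gradient bound then follows from the weighted-average inequality $\min_i \|\nabla f(\vx_i)\|^2 \leq \sum_i B_i \|\nabla f(\vx_i)\|^2 / \sum_i B_i$, using the cubic summation identity $\sum_{j=1}^{k+1} j(j+1) = (k+1)(k+2)(k+3)/3$ to bound $\sum_{i=0}^k B_i \geq (k+1)(k+2)(k+3)/12$ from below.

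The main technical wrinkle is handling the residual $\|\vx^* - \vv_1\|^2$ (rather than $\|\vx^* - \vx_0\|^2$) that appears after telescoping; absorbing it into the stated master bound requires the single convexity-plus-smoothness step described above. Everything else is careful bookkeeping and a textbook induction on $B_k$.
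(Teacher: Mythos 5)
Your proposal follows the paper's proof essentially line for line: telescope Lemma~\ref{lemma:fgm-ninc-c-k} to obtain a master bound on $\cc_k$, read the two general estimates off the definition of $\cc_k$ (dropping nonnegative terms and using $a_i = B_i/(2L)$), and specialize via the quadratic lower bound $B_k \geq (k+1)(k+2)/4$ together with $f(\vx_0) - f(\vx^*) \leq \frac{L}{2}\|\vx_0-\vx^*\|^2$ and the cubic-sum identity. Two small points are worth flagging. First, you correctly observe that telescoping the lemma's inequality from $k=1$ leaves $\frac{L}{2}\|\vx^*-\vv_1\|^2$, not $\frac{L}{2}\|\vx^*-\vv_0\|^2$ (the paper writes $\vv_0$ here, which is a slip), and your convexity-plus-smoothness step showing $\cc_0 + \frac{L}{2}\|\vx^*-\vv_1\|^2 \leq B_0(f(\vx_0)-f(\vx^*)) + \frac{L}{2}\|\vx_0-\vx^*\|^2$ is exactly what closes that gap; note that this step needs $b_0^2 \leq b_0$, i.e.\ $B_0 \leq 1$, since the leftover term is $(b_0^2-b_0)(f(\vx_0)-f(\vx^*))$. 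Second, you fix $B_0 = 1$, which does yield the stated constant $4$ in the function-value bound but gives $12(B_0+1) = 24$ rather than $18$ in the min-gradient bound; the paper's constant $18$ comes from the normalization $B_0 = 1/2$, which is what the comparison sequence $b_k = (k+1)/2$ implicitly imposes. Taking $B_0 = 1/2$ instead of $1$ (your induction $B_k \geq (k+1)(k+2)/4$ still has base case $B_0 = 1/2 = 1\cdot 2/4$ and closes the same way) recovers the paper's constants exactly. Your use of $\cc_{k+1}$ to extract the gradient-sum bound is equivalent to the paper's route of using $\cc_k$ together with $B_k(f(\vx_k)-f(\vx^*)) \geq \frac{B_k}{2L}\|\nabla f(\vx_k)\|^2$.
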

\begin{proof}
Applying Lemma~\ref{lemma:fgm-ninc-c-k} and the definition of $\cc_k,$ we have, $\forall k \geq 1$:
\begin{align*}
    \cc_k &\leq \cc_0 + \frac{L}{2}\|\vx^* - \vv_0\|^2 - \frac{L}{2}\|\vv_{k+1} - \vx^*\|^2\\
    &\leq B_0(f(\vx_0) - f(\vx^*)) + \frac{L}{2}\|\vx^* - \vx_0\|^2.
\end{align*}
Equivalently:
\begin{equation*}
    \sum_{i=0}^{k-1} a_i \|\nabla f(\vx_i)\|^2 + B_k(f(\vx_k) - f(\vx^*)) \leq B_0(f(\vx_0) - f(\vx^*)) + \frac{L}{2}\|\vx^* - \vx_0\|^2.
\end{equation*}
The first part of the theorem is now immediate, as $\sum_{i=0}^{k-1} a_i \|\nabla f(\vx_i)\|^2 \geq 0$ and $$
B_k(f(\vx_k) - f(\vx^*)) \geq \frac{B_k}{2L}\|\nabla f(\vx_k)\|^2 \geq a_k \|\nabla f(\vx_k)\|^2.
$$
For the second part, we only need to bound the growth of $B_k$ when ${b_k}^2 = (B_k - B_{k-1})^2 = B_k.$ It is a standard result that this growth is quadratic and at least as fast the growth resulting from choosing $b_k = \frac{k+1}{2},$ $\forall k.$ Thus, $B_k \geq \sum_{i=0}^k \frac{i+1}{2} = \frac{(k+1)(k+2)}{4}$ and $\sum_{i=0}^k B_i \geq \frac{(k+1)(k+2)(k+3)}{12}.$ Using that $f(\vx_0) - f(\vx^*) \leq \frac{L}{2}\|\vx_0 - \vx^*\|^2,$ it now follows from the first part of the theorem that 
$$
    f(\vx_k) - f(\vx^*) \leq \frac{4 L\|\vx_0 - \vx^*\|^2}{(k+1)(k+2)}
$$
and
$$
    \min_{0\leq i \leq k}\|\nabla f(\vx_i)\|^2 \leq \frac{\sum_{i=0}^{k} {B_i} \|\nabla f(\vx_i)\|^2}{\sum_{i=0}^k B_i} \leq \frac{18 L^2 \|\vx_0 - \vx^*\|^2}{(k+1)(k+2)(k+3)},   
$$
as claimed.
\end{proof}
\begin{remark}\label{rem:fgm}
It may not be immediately clear why the bound from Theorem~\ref{thm:FGM-conv} improves upon the bound for gradient descent from Lemma~\ref{lem:gd-cvx}, as in the former the gradient is bounded as a function of $\|\vx^* - \vx_0\|^2,$ while in the latter it is bounded as a function of $f(\vx_0)- f(\vx^*).$ Here, one should note that, using the standard convergence result for the optimality gap of gradient descent $f(\vx_k) - f(\vx^*) = O\big(\frac{L\|\vx_0 - \vx^*\|^2}{k}\big)$ and combining it with the bound from Lemma~\ref{lem:gd-cvx}, we also have that $\|\nabla f(\vx_k)\|^2 = O\big(L(\frac{f(\vx_{\lceil k/2 \rceil}) - f(\vx^*))}{k}\big) = O\big(\frac{L^2\|\vx_0 - \vx^*\|^2}{k^2}\big).$ Furthermore, this bound is known to be tight~\cite[Theorem 2]{kim2018generalizing}, and it also applies to $\min_{0\leq i \leq k}\|\nabla f(\vx_i)\|^2$, as gradient descent monotonically decreases the gradient. We also note that the improved bound for FGM from Theorem~\ref{thm:FGM-conv} can only be established for the minimum gradient norm up to iteration $k;$ as shown numerically in~\cite{kim2018generalizing}, the bound for the gradient of the last iterate is no better than that of gradient descent, i.e., $\|\nabla f(\vx_k)\|^2 = \Omega\big(\frac{L^2\|\vx_0 - \vx^*\|^2}{k^2}\big)$.
\end{remark}

%%%%%%%%%%%%%%%%%
\subsubsection{Optimized Method for the Gradients}

The only known method that achieves the optimal convergence bound of the form $\|\nabla f(\vx_k)\|^2 = O\big(\frac{L(f(\vx_0) - f(\vx^*))}{k^2}\big)$ is the optimized method for the gradients (OGM-G), due to Kim and Fessler~\cite{kim2020optimizing}. This method was obtained using the performance estimation framework (PEP) of Drori and Teboulle~\cite{drori2014performance}, which relies on numerical solutions to semidefinite programs that model the worst case performance of methods on a given class of problems (such as, e.g., unconstrained problems with smooth convex objective functions considered here). While this is a very powerful approach that generally produces tight convergence analysis and worst case instances as a byproduct, as discussed before, the intuition behind the methods and their analysis obtained using PEP is not always clear.

In this section, we show that OGM-G naturally arises from a potential function that fits within the broader framework studied in this paper. In particular, as mentioned earlier in this section, we can view OGM-G as trading off the norm of the gradient for a gap w.r.t.~an anchor point, which is the last point constructed by the algorithm. As a consequence of anchoring to the last point, the algorithm crucially requires fixing the number of iterations in advance to achieve the optimal convergence bound stated above.

The potential function used for analyzing OGM-G is defined by
\begin{equation}\label{eq:pot-fn-OGM-G}
    \cc_k = A_k\Big(\frac{1}{2L}\|\nabla f(\vx_k)\|^2 + \frac{1}{2L}\|\nabla f(\vx_K)\|^2 + f(\vx_k) - f(\vx_K)\Big),
\end{equation}
where $K$ is the total number of iterations for which OGM-G is invoked. 

Unlike for other algorithms, we will not be able to argue that $\cc_k - \cc_{k-1} \leq E_k$ for $E_k$ that is either zero or only contains telescoping terms. Instead, we will settle for a more modest goal of arguing that, under the appropriate choice of algorithm steps and growth of the sequence $A_k,$ we have
$
    \cc_K \leq \cc_0. 
$ 
Observe that, by the definition of $\cc_k$, if we can prove that $A_K/A_0 = \Omega(K^2),$ this condition immediately leads to the desired bound
$$
    \|\nabla f(\vx_K)\|^2 = O\Big(\frac{L(f(\vx_0)- f(\vx_K))}{K^2}\Big) = O\Big(\frac{L(f(\vx_0)- f(\vx^*))}{K^2}\Big). 
$$

As before, we define $a_k = A_k - A_{k-1}$ and assume it is strictly positive, for all $k$ (i.e., $A_k$ is strictly increasing). To bound $\cc_K,$ we start by bounding the change in the potential function $\cc_k - \cc_{k-1},$ for $k \geq 1,$ in the following lemma. Observe that the lemma itself is algorithm-independent.

\begin{lemma}\label{lemma:OGM-G-pot-change}
Let $\cc_k$ be defined by Eq.~\eqref{eq:pot-fn-OGM-G}, for all $k \in \{0, 1, \dots, K\}.$ Define $\vy_k = \vx_k - \frac{1}{L}\nabla f(\vx_k)$ for $k \geq 0,$ and set $\vy_{-1} = \vx_0.$ Then, $\forall 1\leq k \leq K:$
\begin{align*}
    \cc_k - \cc_{k-1} \leq\;& A_k \innp{\nabla f(\vx_k), \vx_k - \vy_{k-1}} - A_{k-1}\innp{\nabla f(\vx_{k-1}), \vx_{k-1} - \vy_{k-2}}\\
    &+ \innp{\nabla f(\vx_{k-1}), A_k \vy_{k-1} - A_{k-1}\vy_{k-2} - a_k \vy_K}.
\end{align*}
\end{lemma}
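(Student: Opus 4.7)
The plan is to start from the definition of $\cc_k$ and expand
\[
\cc_k - \cc_{k-1} = \tfrac{A_k}{2L}\|\nabla f(\vx_k)\|^2 - \tfrac{A_{k-1}}{2L}\|\nabla f(\vx_{k-1})\|^2 + \tfrac{a_k}{2L}\|\nabla f(\vx_K)\|^2 + \big[A_k f(\vx_k) - A_{k-1}f(\vx_{k-1}) - a_k f(\vx_K)\big].
\]
The only nontrivial piece is the bracketed $f$-value combination, which I would rewrite using $a_k = A_k - A_{k-1}$ as $A_k(f(\vx_k) - f(\vx_{k-1})) - a_k(f(\vx_K) - f(\vx_{k-1}))$. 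I would then apply Fact~\ref{fact:smooth+convex} in two ways: as an \emph{upper} bound on $f(\vx_k)-f(\vx_{k-1})$ (taking $\vx = \vx_k$, $\vy = \vx_{k-1}$) and as a \emph{lower} bound on $f(\vx_K)-f(\vx_{k-1})$ (taking $\vx = \vx_{k-1}$, $\vy = \vx_K$). This is the one crucial choice in the proof: it is what produces the later cancellations.

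Next, I would expand the squared gradient differences $\|\nabla f(\vx_{k-1}) - \nabla f(\vx_k)\|^2$ and $\|\nabla f(\vx_K) - \nabla f(\vx_{k-1})\|^2$ and collect coefficients. The $\|\nabla f(\vx_k)\|^2$ and $\|\nabla f(\vx_K)\|^2$ terms cancel out exactly, and the coefficient of $\|\nabla f(\vx_{k-1})\|^2$ collapses to $-\tfrac{A_k}{L}$, using $A_{k-1} + A_k + a_k = 2A_k$. What remains are three inner products involving $\nabla f(\vx_k)$ and $\nabla f(\vx_{k-1})$ plus the gradient-gradient cross terms $\tfrac{A_k}{L}\innp{\nabla f(\vx_{k-1}), \nabla f(\vx_k)}$ and $\tfrac{a_k}{L}\innp{\nabla f(\vx_K), \nabla f(\vx_{k-1})}$.

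I would now substitute $\nabla f(\vx_i) = L(\vx_i - \vy_i)$ to eliminate all gradient-gradient inner products. Combining $\tfrac{A_k}{L}\innp{\nabla f(\vx_{k-1}), \nabla f(\vx_k)}$ with $A_k\innp{\nabla f(\vx_k), \vx_k - \vx_{k-1}}$ gives $A_k\innp{\nabla f(\vx_k), \vx_k - \vy_{k-1}}$; combining $\tfrac{a_k}{L}\innp{\nabla f(\vx_K), \nabla f(\vx_{k-1})}$ with $-a_k\innp{\nabla f(\vx_{k-1}), \vx_K - \vx_{k-1}}$ gives $a_k\innp{\nabla f(\vx_{k-1}), \vx_{k-1} - \vy_K}$; and $-\tfrac{A_k}{L}\|\nabla f(\vx_{k-1})\|^2$ becomes $-A_k\innp{\nabla f(\vx_{k-1}), \vx_{k-1} - \vy_{k-1}}$. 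At this point I will have the compact bound
\[
\cc_k - \cc_{k-1} \leq A_k\innp{\nabla f(\vx_k), \vx_k - \vy_{k-1}} - A_k\innp{\nabla f(\vx_{k-1}), \vx_{k-1} - \vy_{k-1}} + a_k\innp{\nabla f(\vx_{k-1}), \vx_{k-1} - \vy_K}.
\]

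The final step is a purely algebraic rewriting to match the lemma's telescoping form: adding and subtracting $A_{k-1}\innp{\nabla f(\vx_{k-1}), \vy_{k-2}}$ and using $A_k = A_{k-1} + a_k$ converts the last two terms into $-A_{k-1}\innp{\nabla f(\vx_{k-1}), \vx_{k-1} - \vy_{k-2}} + \innp{\nabla f(\vx_{k-1}), A_k\vy_{k-1} - A_{k-1}\vy_{k-2} - a_k \vy_K}$. I expect the main obstacle to be the bookkeeping in steps two and three — in particular, verifying that the chosen asymmetric split of the $f$-value combination is the one that causes the $\|\nabla f(\vx_k)\|^2$ and $\|\nabla f(\vx_K)\|^2$ terms to cancel exactly (the natural symmetric split $A_{k-1}(f(\vx_k)-f(\vx_{k-1})) + a_k(f(\vx_k)-f(\vx_K))$ would also cancel them but produce a form with $\vx_k - \vy_K$ that does not telescope in $k$). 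Everything else is routine expansion.
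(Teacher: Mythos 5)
Your proof is correct and follows essentially the same route as the paper: decompose the $f$-value combination as $A_k\big(f(\vx_k)-f(\vx_{k-1})\big) + a_k\big(f(\vx_{k-1})-f(\vx_K)\big)$, apply the smooth-and-convex inequality once to each difference, observe the exact cancellation of the $\|\nabla f(\vx_k)\|^2$ and $\|\nabla f(\vx_K)\|^2$ terms, and rewrite the remainder in the telescoping form. The only cosmetic difference is that the paper first repackages Fact~\ref{fact:smooth+convex} into the ``$\vy$-form'' $f(\vxh)-f(\vx)\leq\innp{\nabla f(\vxh),\vxh-\vy}-\tfrac{1}{2L}\|\nabla f(\vxh)\|^2-\tfrac{1}{2L}\|\nabla f(\vx)\|^2$ before substituting, whereas you expand $\|\nabla f(\vy)-\nabla f(\vx)\|^2$ by hand and then recombine the cross terms into the $\vy_i$ variables; these are the same calculation. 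Your aside on why the alternative split $A_{k-1}(f(\vx_k)-f(\vx_{k-1}))+a_k(f(\vx_k)-f(\vx_K))$ is rejected (it cancels the squared-norm terms but yields a non-telescoping $\innp{\nabla f(\vx_k),\vx_k-\vy_K}$ term) is a nice sanity check not spelled out in the paper, but it does not change the argument.
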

\begin{proof}
Let $\vx, \vxh$ be any two vectors from $\rr^d,$ and let $\vy = \vx - \frac{1}{L}\nabla f(\vx).$ Then, Eq.~\eqref{eq:smooth+cvx} can be equivalently written as:
\begin{equation}\label{eq:equiv-smooth+cvx}
    f(\vxh) - f(\vx) \leq \innp{\nabla f(\vxh), \vxh - \vy} - \frac{1}{2L}\|\nabla f(\vxh)\|^2 - \frac{1}{2L}\|\nabla f(\vx)\|^2.
\end{equation}

From the definition of $\cc_k$ in Eq.~\eqref{eq:pot-fn-OGM-G}, we have
\begin{align*}
    \cc_k - \cc_{k-1} =\;& \frac{A_k}{2L}\|\nabla f(\vx_k)\|^2 - \frac{A_{k-1}}{2L}\|\nabla f(\vx_{k-1})\|^2 + \frac{a_k}{2L}\|\nabla f(\vx_K)\|^2\\
    &+ A_k(f(\vx_k) - f(\vx_{k-1})) + a_k(f(\vx_{k-1}) - f(\vx_K)).
\end{align*}
Applying Eq.~\eqref{eq:equiv-smooth+cvx} to $f(\vx_k) - f(\vx_{k-1})$ and $f(\vx_{k-1}) - f(\vx_K)$, we further have
\begin{align*}
    \cc_k - \cc_{k-1} \leq\;& - \frac{A_k}{L}\|\nabla f(\vx_{k-1})\|^2 + A_k \innp{\nabla f(\vx_k), \vx_k - \vy_{k-1}}\\
    &+ a_k \innp{\nabla f(\vx_{k-1}), \vx_{k-1} - \vy_K}\\
    =\;& A_k \innp{\nabla f(\vx_k), \vx_k - \vy_{k-1}} + A_{k}\innp{\nabla f(\vx_{k-1}), \vy_{k-1} - \vy_K}\\
    &- A_{k-1}\innp{\nabla f(\vx_{k-1}), \vx_{k-1} - \vy_K}\\
    =\;& A_k \innp{\nabla f(\vx_k), \vx_k - \vy_{k-1}} - A_{k-1}\innp{\nabla f(\vx_{k-1}), \vx_{k-1} - \vy_{k-2}} \\
    &+ \innp{\nabla f(\vx_{k-1}), A_k \vy_{k-1}, - A_{k-1}\vy_{k-2} - a_k \vy_K},
\end{align*}
as claimed.
\end{proof}

The following lemma provides the restrictions on the step sizes of the algorithm that are needed to ensure that $\cc_K \leq \cc_0.$ Here, we assume that each point $\vx_k$ can be expressed as the sum of the initial point $\vx_0$ and some linear combination of the gradients evaluated at points $\vx_i$ for $0\leq i \leq k-1.$ Note that most of the standard first-order algorithms can be expressed in this form.

\begin{lemma}\label{lemma:OGM-G-final-pot}
Let $\cc_k$ be defined by Eq.~\eqref{eq:pot-fn-OGM-G} for $k \in \{0,\dots, K\}$ and assume that points $\vx_k$ can be expressed as $\vx_k = \vx_0 - \frac{1}{L}\sum_{i=0}^{k-1}\beta_{i, k}\nabla f(\vx_i),$ where $\beta_{i, k}$ are some real scalars. Define $\beta_{k, k} = 1,$ so that $\vy_k = \vx_k - \frac{1}{L}\nabla f(\vx_k) = \vx_0 - \frac{1}{L}\sum_{i=0}^k \beta_{i, k}\nabla f(\vx_i)$ and set $\vy_{-1} = \vx_0.$ If the following two conditions are satisfied for all $0\leq j < k \leq K-1$:
\begin{gather}
    \beta_{k, K-1} + \frac{a_{k+1}}{A_K} \leq \frac{A_{k+1}}{a_{k+1}}, \label{eq:OGM-G-cond-1}\\
    A_{k+1}\beta_{j, k} = A_k \beta_{j, k-1} + a_{k+1}\Big(\beta_{j, K-1} + \frac{a_{j+1}}{A_K}\Big) + a_{j+1}\Big(\beta_{k, K-1} + \frac{a_{k+1}}{A_K}\Big) \label{eq:OGM-G-cond-2}
\end{gather}
and if
\begin{equation}\label{eq:OGM-G-x_K}
    \vx_K = \vy_{K-1} - \frac{1}{L A_K}\sum_{k=0}^{K-1}a_{k+1}\nabla f(\vx_k),
\end{equation}
then $\cc_K \leq \cc_0.$ Further, the largest growth of $\frac{A_K}{A_0}$ for which both of these conditions can be satisfied is $O(K^2).$ 
\end{lemma}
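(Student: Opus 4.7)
The plan is to telescope the per-step bound from Lemma~\ref{lemma:OGM-G-pot-change} over $k = 1, \ldots, K$, substitute the affine representation of every iterate into the resulting expression, and then recognize Eqs.~\eqref{eq:OGM-G-cond-1} and \eqref{eq:OGM-G-cond-2} as the conditions that make the coefficients of the diagonal and off-diagonal terms (respectively) of the induced quadratic form in the gradients non-positive and zero.

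Summing Lemma~\ref{lemma:OGM-G-pot-change} and telescoping, using $\vy_{-1}=\vx_0$ to kill the $k=1$ boundary term, I obtain
\begin{equation*}
\cc_K - \cc_0 \leq A_K\innp{\nabla f(\vx_K),\, \vx_K - \vy_{K-1}} + \sum_{j=0}^{K-1}\innp{\nabla f(\vx_j),\, A_{j+1}\vy_j - A_j\vy_{j-1} - a_{j+1}\vy_K}.
\end{equation*}
Then I would substitute $\vy_j = \vx_0 - \frac{1}{L}\sum_{i=0}^j \beta_{i,j}\nabla f(\vx_i)$ everywhere, noting that by Eq.~\eqref{eq:OGM-G-x_K} and $\vy_K = \vx_K - \frac{1}{L}\nabla f(\vx_K)$, the vector $\vy_K$ fits the same pattern with the natural extension $\beta_{i,K} := \beta_{i,K-1} + \frac{a_{i+1}}{A_K}$ for $i < K$ and $\beta_{K,K} := 1$. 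The $\vx_0$ contributions then cancel because $A_{j+1} - A_j - a_{j+1} = 0$, reducing the right-hand side to a pure quadratic form in the gradients $\nabla f(\vx_0),\ldots, \nabla f(\vx_K)$.

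The key calculation is to compare coefficients in this quadratic form. The coefficient of $\innp{\nabla f(\vx_j), \nabla f(\vx_K)}$ from the second sum is $+\frac{a_{j+1}}{L}$, while the first term $A_K\innp{\nabla f(\vx_K), \vx_K - \vy_{K-1}}$ contributes $-\frac{a_{j+1}}{L}$ (directly from the update \eqref{eq:OGM-G-x_K}); hence all inner products involving $\nabla f(\vx_K)$ cancel, and this cancellation is exactly the role of the specified form of $\vx_K$. In the remaining quadratic form in $\nabla f(\vx_0),\dots,\nabla f(\vx_{K-1})$, the coefficient of $\innp{\nabla f(\vx_j), \nabla f(\vx_k)}$ for $0 \leq j < k \leq K-1$ comes out to $\frac{1}{L}\big[A_k\beta_{j,k-1} - A_{k+1}\beta_{j,k} + a_{k+1}(\beta_{j,K-1} + \frac{a_{j+1}}{A_K}) + a_{j+1}(\beta_{k,K-1} + \frac{a_{k+1}}{A_K})\big]$, which vanishes precisely under Eq.~\eqref{eq:OGM-G-cond-2}, while the coefficient of $\|\nabla f(\vx_k)\|^2$ is $\frac{1}{L}\big[a_{k+1}(\beta_{k,K-1}+\frac{a_{k+1}}{A_K}) - A_{k+1}\big]$, which is non-positive precisely under Eq.~\eqref{eq:OGM-G-cond-1}. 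Together these yield $\cc_K \leq \cc_0$.

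For the growth rate claim, the strategy is to saturate Eq.~\eqref{eq:OGM-G-cond-1} with equality to be as aggressive as possible, giving the closed form $\beta_{k,K-1} = \frac{A_{k+1}}{a_{k+1}} - \frac{a_{k+1}}{A_K}$, and then require consistency with Eq.~\eqref{eq:OGM-G-cond-2} subject to the boundary $\beta_{j,j} = 1$. Propagating the recursion from $\beta_{j,j}=1$ up to $\beta_{j,K-1}$ and matching it to the saturated closed form produces a self-consistency recurrence purely on $\{A_k\}$; at the outermost index this already yields $A_K A_{K-1} \geq a_K^2$, structurally identical to the FGM recurrence $b_k^2 \leq B_k$ from Theorem~\ref{thm:FGM-conv}. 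The main obstacle is exactly this algebraic reduction: checking that the coupled self-consistency constraints, when unwound, are no weaker than the FGM-type recurrence and hence cannot permit faster than quadratic growth. Once the reduction is in place, the conclusion $A_K/A_0 = O(K^2)$ follows by the same telescoping-over-$\sqrt{A_k}$ argument used for FGM.
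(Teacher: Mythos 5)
Your derivation of $\cc_K \leq \cc_0$ is correct and matches the paper's argument essentially verbatim: telescope Lemma~\ref{lemma:OGM-G-pot-change}, absorb the $\nabla f(\vx_K)$ terms via the specific form of $\vx_K$ in Eq.~\eqref{eq:OGM-G-x_K}, expand the remainder into a quadratic form in $\nabla f(\vx_0),\dots,\nabla f(\vx_{K-1})$, and read off that Eq.~\eqref{eq:OGM-G-cond-2} kills the off-diagonal coefficients while Eq.~\eqref{eq:OGM-G-cond-1} makes the diagonal ones non-positive. Your formulas for the coefficients of $\innp{\nabla f(\vx_j),\nabla f(\vx_k)}$ and $\|\nabla f(\vx_k)\|^2$ agree with the paper's $P_{j,k}$ and $P_{k,k}$, and your observation that $\vy_K$ has the ``extended'' representation $\beta_{i,K}=\beta_{i,K-1}+a_{i+1}/A_K$ is a clean way of seeing why the $\nabla f(\vx_K)$ terms cancel.

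The growth claim is where you have a real gap, and it is a gap you partially flag yourself. You propose to \emph{saturate} Eq.~\eqref{eq:OGM-G-cond-1} with equality ``to be as aggressive as possible,'' then close the system via Eq.~\eqref{eq:OGM-G-cond-2}. But the lemma asserts a bound on $A_K/A_0$ for \emph{every} admissible choice, not just the saturated one, and you never establish that saturation maximizes the growth; it is plausible but not obvious, since relaxing Eq.~\eqref{eq:OGM-G-cond-1} for one $k$ changes the recursion Eq.~\eqref{eq:OGM-G-cond-2} at other indices. You also acknowledge that the ``main obstacle'' of unwinding the coupled constraints into an FGM-type recurrence is left undone. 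The paper takes a different route that avoids this optimization step entirely: it observes that once $\{A_k\}$ is fixed, the $\beta_{j,k}$ are \emph{uniquely determined} by Eq.~\eqref{eq:OGM-G-cond-2} together with $\beta_{k,k}=1$, so Eq.~\eqref{eq:OGM-G-cond-1} is a pure constraint on $\{A_k\}$. Combined with the (induction, proof omitted in the paper) fact that all $\beta_{j,k}\geq 0$, Eq.~\eqref{eq:OGM-G-cond-1} immediately yields $a_{k+1}^2/A_{k+1}\leq A_K$ for $k\leq K-1$, which caps the growth of $A_k/A_0$ at quadratic; the final jump from $A_{K-1}$ to $A_K$ is bounded separately via $1+a_K/A_K\leq A_K/a_K$. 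To repair your argument you would either need to prove that saturation is extremal, or replace the optimization heuristic with the paper's uniqueness-plus-nonnegativity argument.
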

\begin{proof}
Telescoping the inequality from Lemma~\ref{lemma:OGM-G-pot-change}, we have:
\begin{align*}
    \cc_K - \cc_0 \leq\;& A_K \innp{\nabla f(\vx_K), \vx_K - \vy_{K-1}}\\
    &+ \sum_{k=0}^{K-1}\innp{\nabla f(\vx_k), A_{k+1}\vy_k - A_k \vy_{k-1} - a_{k+1}\vy_K}.
\end{align*}
Observe that $\nabla f(\vx_K)$ only appears in the first term and as part of $\vy_K = \vx_K - \frac{1}{L}\nabla f(\vx_K).$ Thus, grouping the terms that multiply $\nabla f(\vx_K),$ we can equivalently write
\begin{align*}
    \cc_K - \cc_0 \leq\;& \innp{\nabla f(\vx_K), A_K(\vx_K - \vy_{K-1}) + \frac{1}{L}\sum_{k=0}^{K-1}a_{k+1}\nabla f(\vx_k)}\\
    &+ \sum_{k=0}^{K-1}\innp{\nabla f(\vx_k), A_{k+1}\vy_k - A_k \vy_{k-1} - a_{k+1}\vx_K}. 
\end{align*}
The choice of $\vx_K$ from Eq.~\eqref{eq:OGM-G-x_K} ensures that the first term on the right-hand side is zero (and this is how it was chosen). The rest of the terms can be expressed as a function of gradients up to the $(K-1)^{\mathrm{th}}$ one. To simplify the notation, let us define $\vg_{K-1} = \frac{1}{L}\sum_{k=0}^{K-1}a_{k+1}\nabla f(\vx_k).$ Then, we have
\begin{align}\label{eq:OGM-G-C_K-C_0-final}
    \cc_K - \cc_0 \leq \sum_{k=0}^{K-1}\innp{\nabla f(\vx_k), A_{k+1}\vy_k - A_k \vy_{k-1} - a_{k+1}\Big(\vy_{K-1} - \frac{\vg_{K-1}}{A_K}\Big)}.
\end{align}
Observe that, as $\vy_k = \vx_0 - \frac{1}{L}\sum_{i=0}^k \beta_{i, k}\nabla f(\vx_i)$ by the lemma assumptions, the expression on the right-hand side can be written as a linear combination of inner products between gradients, as follows.
\begin{align*}
    \cc_K - \cc_0 \leq \frac{1}{L}\sum_{j=0}^{K-1}\sum_{k=j}^{K-1}P_{j, k}\innp{\nabla f(\vx_j), {\nabla f(\vx_k)}},
\end{align*}
where, by Eq.~\eqref{eq:OGM-G-C_K-C_0-final}, we have that, for all $0 \leq j < k \leq K-1:$
\begin{align*}
    P_{k,k} &= -A_{k+1}\beta_{k, k} + a_{k+1}\Big(\beta_{k, K-1} + \frac{a_{k+1}}{{A_K}}\Big),\\
    P_{j,k} &= - A_{k+1}\beta_{j, k} + A_k \beta_{j, k-1} + a_{k+1}\Big(\beta_{j, K-1} + \frac{a_{j+1}}{A_K}\Big) + a_{j+1}\Big(\beta_{k, K-1} + \frac{a_{k+1}}{A_K}\Big).
\end{align*}
As, by assumption, $\beta_{k,k} = 1,$ conditions in Eqs.~\eqref{eq:OGM-G-cond-1} and \eqref{eq:OGM-G-cond-2} are equivalent to $P_{k, k} \leq 0$ and $P_{j, k} = 0$, for all $0 \leq j < k \leq K-1.$ By construction, these conditions are sufficient for guaranteeing $\cc_k - \cc_0 \leq 0,$ completing the first part of the proof.

Observe that, given a sequence of positive numbers $\{a_k\}_{k\geq 0}$ and $A_k = \sum_{j=0}^k a_j,$ all coefficients $\beta_{j, k}$ are uniquely determined by Eq.~\eqref{eq:OGM-G-cond-2} (as $\beta_{k, k} = 1$ by assumption, and the remaining coefficients can be computed by recursively applying Eq.~\eqref{eq:OGM-G-cond-2}). Thus, the role of the condition from Eq.~\eqref{eq:OGM-G-cond-1} is to limit the growth of the sequence $\{A_k\}_{k\geq 0}.$ Starting with $\beta_{k, k} = 1,$ $\forall k$ (which holds by assumption), it is possible to argue by induction that $\beta_{j, k} \geq 0,$ $\forall j, k$ (the proof is omitted for brevity). Thus the condition  from Eq.~\eqref{eq:OGM-G-cond-1} implies that $\frac{a_{k+1}}{A_{K}}\leq \frac{A_{k+1}}{a_{k+1}}.$ Equivalently, $\forall k \leq K-1$:
\begin{equation}\label{eq:OGM_max-growth}
    \frac{{a_{k+1}}^2}{A_{k+1}} \leq A_K. 
\end{equation}
For any fixed $A_K,$ Eq.~\eqref{eq:OGM_max-growth} implies that $\frac{A_k}{A_0}$ cannot grow faster than quadratically with $k,$ for $k \leq K-1.$ It remains to argue that the sequence does not make a big jump from $A_{K-1}$ to $A_K.$ This follows by using again Eq.~\eqref{eq:OGM-G-cond-1} for $k = K-1$ and recalling that $\beta_{K-1, K-1} = 1.$ We then have
$$
    1 + \frac{a_K}{A_K} \leq \frac{A_K}{a_K}.
$$
Solving for $\frac{a_K}{A_K},$ it follows that $\frac{a_K}{A_K} \leq \frac{-1 + \sqrt{5}}{2} < 0.62,$ and, thus, $\frac{A_K}{A_{K-1}}\leq \frac{1}{1-0.62} < 3,$ completing the proof that $\frac{A_K}{A_0} = O(K^2).$
\end{proof}
That $\frac{A_K}{A_0} = O(K^2)$ is not surprising -- if it were not true, by the discussion from the beginning of this subsection, we would be able to obtain an algorithm that converges at rate faster than $1/K^2,$ which is impossible, due to the existing lower bounds~\cite{carmon2019lower}. This result was rather included to highlight the role of the conditions from Eqs.~\eqref{eq:OGM-G-cond-1} and \eqref{eq:OGM-G-cond-2} in Lemma~\ref{lemma:OGM-G-final-pot}: the first condition limits the growth of $\{A_k\}_{k\geq 0},$ whereas the second determines the step sizes $\beta_{j, k}$ in the algorithm, given the sequence $\{A_k\}_{k\geq 0}$. 

What remains to be shown is that there is a choice of step sizes $\beta_{j, k}$ that guarantees $\frac{A_K}{A_0} = \Theta(K^2),$ and thus leads to an algorithm with the optimal convergence rate. This choice is obtained when the inequality from Eq.~\eqref{eq:OGM-G-cond-1} is satisfied with equality. It is possible to argue that this choice is also the one that leads to the fastest growth of $\frac{A_K}{A_0};$ however, this direction is not pursued here as it unnecessarily complicates the analysis. Further, when Eq.~\eqref{eq:OGM-G-cond-1} is satisfied with equality, Eq.~\eqref{eq:OGM-G-cond-2} can be further simplified, and it leads to the algorithm description that does not necessitate storing all of the gradients, but only a constant number of $d$-dimensional vectors. However, similar to the algorithm description in~\cite{kim2020optimizing}, the entire sequence $\{A_k\}_{k=0}^K$ needs to be pre-computed and stored, which appears to be unavoidable. 
The algorithm and its convergence rate are summarized in the following theorem.

\begin{theorem}[Convergence of Optimized Gradient Method]\label{thm:OGM-G}
Let $f: \rr^d \to \rr$ be an $L$-smooth function and let $\vx_0 \in \rr^d$ be an arbitrary initial point. Let $K \geq 1.$ Consider the following algorithm. Let $\vv = \vx_0 - \frac{A_1}{a_1 L}\nabla f(\vx_0)$, $\vg_0 = a_1 \nabla f(\vx_0).$ For $k = 1$ to $K-1,$ 
\begin{equation}\label{eq:OGM-G}
    \begin{gathered}
        \vy_{k-1} = \vx_{k-1} - \frac{1}{L}\nabla f(\vx_{k-1}),\\
        \vx_k = \frac{A_k}{A_{k+1}}\vy_{k-1} + \frac{a_{k+1}}{A_{k+1}}\vv_{k-1} - \frac{1}{a_{k+1}}\vg_{k-1},\\
        \vv_k = \vv_{k-1} - \frac{1}{L}\frac{A_{k+1}}{a_{k+1}}\nabla f(\vx_k), \; \vg_k = \vg_{k-1} + a_{k+1}\nabla f(\vx_k),
    \end{gathered}
\end{equation}
where the sequence $\{A_k\}_{k=0}^K$ is recursively defined by the following 
\begin{equation}\label{eq:A_k expression}
    \begin{cases}
    A_k = 1, & \text{ if } k = K,\\
    A_{k} = A_{k+1}\big[1+\frac{1}{2}A_{k+1}-\frac{1}{2}\sqrt{A_{k+1}(4+A_{k+1})}\big], & \text{ if } 0 \leq k \leq K-1,
    \end{cases}
\end{equation}
and $a_{k+1} = A_{k+1} - A_k,$ for $0 \leq k \leq K-1.$ 

If $\vx_K$ is defined by
$$
    \vx_K = \vy_{K-1} - \frac{1}{A_K L}\vg_{K-1},
$$
then
$$
    \|\nabla f(\vx_K)\|^2 \leq  \frac{16 L(f(\vx_0) - f(\vx^*))}{(K+2)^2},
$$
where $\vx^* \in \argmin_{\vx \in \rr^d} f(\vx).$
\end{theorem}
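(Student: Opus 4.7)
The plan is to reduce the theorem to Lemma~\ref{lemma:OGM-G-final-pot}: that lemma already yields $\cc_K \leq \cc_0$ as soon as one exhibits coefficients $\{\beta_{j,k}\}$ satisfying Eqs.~\eqref{eq:OGM-G-cond-1}--\eqref{eq:OGM-G-cond-2} together with a final update $\vx_K$ of the form \eqref{eq:OGM-G-x_K}. The proof therefore splits into two tasks: (a) verifying that the compact algorithm in Eq.~\eqref{eq:OGM-G}, together with the recursion in Eq.~\eqref{eq:A_k expression}, is precisely the algorithm arising when Eq.~\eqref{eq:OGM-G-cond-1} is imposed with equality for every $k$ and Eq.~\eqref{eq:OGM-G-cond-2} is used to pin down the remaining step sizes; and (b) converting the resulting potential inequality $\cc_K \leq \cc_0$ into the stated bound on $\|\nabla f(\vx_K)\|^2$.

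For task (a), I would unfold the algorithm in Eq.~\eqref{eq:OGM-G} by induction on $k$: the auxiliary sequence $\vv_k$ serves as a mirror-descent-style accumulator of weighted gradients, while $\vg_k = \sum_{i=0}^{k} a_{i+1}\nabla f(\vx_i)$ stores exactly the combination needed to form $\vx_K$ in Eq.~\eqref{eq:OGM-G-x_K}. Imposing Eq.~\eqref{eq:OGM-G-cond-1} with equality forces $\beta_{k,K-1} = A_{k+1}/a_{k+1} - a_{k+1}/A_K$; substituting this into Eq.~\eqref{eq:OGM-G-cond-2} yields a one-step relation between $A_k$ and $A_{k+1}$ that, after algebraic simplification, coincides with Eq.~\eqref{eq:A_k expression}. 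The convex combination defining $\vx_k$ in Eq.~\eqref{eq:OGM-G} is then the rearrangement of $\vy_{k-1}$, $\vv_{k-1}$, and $\vg_{k-1}$ dictated by Eq.~\eqref{eq:OGM-G-cond-2}. This bookkeeping is the main obstacle, because Eq.~\eqref{eq:OGM-G-cond-2} is a triangular system across all $0\leq j < k \leq K-1$; verifying it rigorously requires a simultaneous induction on $j$ and $k$ coupled with an explicit representation of $\vx_k$, $\vv_k$, and $\vg_k$ in terms of the gradients $\{\nabla f(\vx_i)\}_{i<k}$.

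Granted (a), task (b) is short. Since $A_K = 1$, the potential at $k = K$ collapses to $\cc_K = \frac{1}{L}\|\nabla f(\vx_K)\|^2$. For $\cc_0$, applying Fact~\ref{fact:smooth+convex} with $\vx = \vx^*$ and $\vy \in \{\vx_0, \vx_K\}$ (using $\nabla f(\vx^*) = 0$) gives $\frac{1}{2L}\|\nabla f(\vx_0)\|^2 \leq f(\vx_0) - f(\vx^*)$ and $\frac{1}{2L}\|\nabla f(\vx_K)\|^2 \leq f(\vx_K) - f(\vx^*)$, which together with the trivial $f(\vx_0) - f(\vx_K) \leq f(\vx_0) - f(\vx^*)$ yields $\cc_0 \leq 2 A_0 (f(\vx_0) - f(\vx^*))$. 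Finally, I would analyze the growth of $A_k$ via the substitution $u_k = 1/\sqrt{A_k}$: Eq.~\eqref{eq:A_k expression} simplifies to $u_{k+1}^2 = u_k^2 - u_k$, so $(u_k - u_{k+1})(u_k + u_{k+1}) = u_k$, and hence $u_k - u_{k+1} > \tfrac{1}{2}$. Telescoping from $u_K = 1$ gives $u_0 > (K+2)/2$, i.e., $A_0 < 4/(K+2)^2$, and combining with $\cc_K \leq \cc_0$ yields the claimed bound (with some slack in the constant).
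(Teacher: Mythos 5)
Your proposal is correct and follows the same overall strategy as the paper: reduce to Lemma~\ref{lemma:OGM-G-final-pot}, verify that imposing Eq.~\eqref{eq:OGM-G-cond-1} with equality together with Eq.~\eqref{eq:OGM-G-cond-2} recovers the compact recursion in Eq.~\eqref{eq:OGM-G}, and then convert $\cc_K\leq\cc_0$ into the gradient-norm bound using the growth of $A_k$. That is exactly how the paper structures the argument, so no new route is being taken for the heart of the proof.

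Two places where you genuinely improve on the paper's presentation are worth noting. First, your estimate of $\cc_0$ is tighter: using $\frac{1}{2L}\|\nabla f(\vx_K)\|^2 \leq f(\vx_K) - f(\vx^*)$ and $\frac{1}{2L}\|\nabla f(\vx_0)\|^2 \leq f(\vx_0) - f(\vx^*)$, the $f(\vx_K)$ terms cancel exactly (you do not even need the extra "trivial" inequality you invoke), giving $\cc_0 \leq 2 A_0(f(\vx_0)-f(\vx^*))$ and hence the constant $8$ rather than $16$ in the final bound. Second, your substitution $u_k = 1/\sqrt{A_k}$ is a cleaner equivalent to the paper's $D_n = 1/A_{K-n}$ induction in Lemma~\ref{lemma:OGM-G-A_k-growth}: from $A_k A_{k+1}^2 = (A_{k+1}-A_k)^2$ one gets $u_{k+1}^2 = u_k^2 - u_k$, the factored form gives $u_k - u_{k+1} > \tfrac{1}{2}$, and telescoping from $u_K = 1$ yields $A_0 < 4/(K+2)^2$ with essentially no induction bookkeeping. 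This replaces the whole $D_n \geq (n+2)^2/4$ argument.

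The one place to be careful is task (a). You state that plugging the equality version of Eq.~\eqref{eq:OGM-G-cond-1} into Eq.~\eqref{eq:OGM-G-cond-2} "yields a one-step relation between $A_k$ and $A_{k+1}$ that, after algebraic simplification, coincides with Eq.~\eqref{eq:A_k expression}." That glosses over a nontrivial step: Eq.~\eqref{eq:OGM-G-cond-2} is a relation among the $\beta_{j,k}$, and the one-step recursion $A_k A_{k+1}^2 = a_{k+1}^2$ only falls out once one imposes $\beta_{k,k}=1$ for all $k$ and runs a two-index induction (this is precisely the content of Lemma~\ref{lemma:OGM-G-A_k-growth} in the appendix). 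You do acknowledge at the end that "verifying it rigorously requires a simultaneous induction on $j$ and $k$," so I count this as an honestly flagged gap rather than an error, but a complete write-up would need to actually carry out that induction and show that the resulting $\vx_k$ update matches the compact form involving $\vv_{k-1}$ and $\vg_{k-1}$.
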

\begin{proof}
The proof strategy is as follows. We first argue that the algorithm from the theorem statement satisfies $\cc_K \leq \cc_0,$ where $\cc_k$ is defined by Eq.~\eqref{eq:pot-fn-OGM-G}. This is done by showing that we can apply Lemma~\ref{lemma:OGM-G-final-pot}. Then, by the definition of $\cc_k,$ $\cc_K \leq \cc_0$ is equivalent to
$$
    \|\nabla f(\vx_K)\|^2 \leq 2L\frac{A_0}{A_K}\Big(f(\vx_0) - f(\vx_K) + \frac{1}{2L}\|\nabla f(\vx_0)\|^2\Big).
$$
As $f(\vx_K) \geq f(\vx^*)$ and $\frac{1}{2L}\|\nabla f(\vx_0)\|^2 \leq f(\vx_0) - f(\vx^*),$ what then remains to be argued is that $\frac{A_0}{A_K} = O(\frac{1}{K^2}).$ 

To apply Lemma~\ref{lemma:OGM-G-final-pot}, observe first that the definition of $\vx_K$ from the theorem statement is the same as the definition of $\vx_K$ in Lemma~\ref{lemma:OGM-G-final-pot}. For $k \leq K-1,$ let us define $\vx_k = \vx_0 - \frac{1}{L}\sum_{j=0}^{k-1} \beta_{j, k}\nabla f(\vx_j),$ $\beta_{k, k} = 1,$ and $\vy_k = \vx_k - \frac{\beta_{k,k}}{L}\nabla f(\vx_k)$ as in Lemma~\ref{lemma:OGM-G-final-pot} and show that when both conditions from Lemma~\ref{lemma:OGM-G-final-pot} stated in Eqs.~\eqref{eq:OGM-G-cond-1} and Eq.~\eqref{eq:OGM-G-cond-2} are satisfied with equality, we recover the algorithm from the theorem statement, and thus the two sequences of points are equivalent, and so we can conclude that $\cc_K \leq \cc_0.$ 

When Eq.~\eqref{eq:OGM-G-cond-1} holds with equality, we have that \begin{equation}\label{eq:b_k,K-1}
    \beta_{k, K-1} + \frac{a_{k+1}}{A_K} = \frac{A_{k+1}}{a_{k+1}}.
\end{equation} 
Plugging it into Eq.~\eqref{eq:OGM-G-cond-2}, we have
\begin{equation}\label{eq:A_k,b_j,k}
    A_{k+1}\beta_{j, k} = A_k \beta_{j, k-1} + a_{k+1} \frac{A_{j+1}}{a_{j+1}} + a_{j+1} \frac{A_{k+1}}{a_{k+1}}.
\end{equation}
Thus, it follows that
\begin{align*}
    A_{k+1}\vx_k - A_k \vy_{k-1} &= a_{k+1} \vx_0 - \frac{a_{k+1}}{L} \sum_{j=0}^{k-1}\frac{A_{j+1}}{a_{j+1}} \nabla f(\vx_j) - \frac{a_{j+1}}{L}\sum_{j=0}^{k-1}a_{j+1}\nabla f(\vx_j)\\
    &= a_{k+1}\vv_{k-1} - \frac{A_{k+1}}{a_{k+1}}\vg_{k-1},
\end{align*}
which is the same as the definition of $\vx_k$ from Eq.~\eqref{eq:OGM-G}. 

It remains to show that the conditions from Lemma~\ref{lemma:OGM-G-final-pot} imply the recursive definition of the sequence $\{A_k\}_{k\geq 0}$ and that $\frac{A_K}{A_0} \geq \frac{4}{(K+2)^2}.$ This is established by Lemma~\ref{lemma:OGM-G-A_k-growth} in the appendix. 
\end{proof}
\begin{remark}\label{rem:OGM-G}
While OGM-G provides the optimal convergence guarantee for norm of the gradient, its convergence rate for the optimality gap is not known. Thus, it does not immediately imply a bound on norm of the gradient in terms of $\|\vx^* - \vx_0\|^2.$ However, as observed in~\cite{nesterov2020primal}, it is possible to obtain a bound of $\|\nabla f(\vx_K)\|^2 = O\big(\frac{L^2\|\vx^* - \vx_0\|^2}{K^4}\big)$ from OGM-G, by running Nesterov FGM for $\lfloor K/2 \rfloor$ iterations, followed by  $\lceil K/2 \rceil$ iterations of OGM-G.
\end{remark}
%
%%%%%%%%%%%%%%%%%%%%%%%
\subsection{Discussion} \label{sec:discussion}

Gradient descent is perhaps the simplest method that can be used for minimizing the gradient norm. We also conjecture that it is, in a certain sense, optimal. 
\begin{conjecture}\label{conj:gd-optimality}
For any $K > 0$ and any method that constructs its iterates as $\vx_k = \vx_0 - \sum_{i=0}^{k-1}\beta_{i, k} \nabla f(\vx_i),$ where $\vx_0 \in \rr^d$ is the initial point, $f$ is a convex function accessed via a gradient oracle, and coefficients $\beta_{i, k} \in \rr$ can depend on $L > 0, i, k$ but are otherwise chosen independently of $K$ or the input function $f,$ there exists an $L$-smooth convex input function $f$ and an absolute constant $C > 0$ such that 
$$
    \|\nabla f(\vx_K)\|^2 \geq C\frac{L(f(\vx_0) - f(\vx^*))}{K}.
$$
\end{conjecture}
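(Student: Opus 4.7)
The plan is to establish this via a hard-instance construction tailored to the algorithm. Since the coefficients $\beta_{i,k}$ are fixed in advance (depending only on $L, i, k$, not on $f$ or $K$), the method produces iterates that are affine combinations of past gradients, and the argument proceeds in two stages: a warm-up on quadratic inputs that clarifies what extremal-polynomial arguments can deliver, followed by a non-quadratic construction that genuinely exploits horizon-obliviousness.

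\textbf{Stage 1 (quadratic reduction).} First, specialize to $f(\vx) = \frac{1}{2}\vx^T A \vx - \vb^T \vx$ with $0 \preceq A \preceq L I$. A straightforward induction on $k$ shows that $\nabla f(\vx_k) = P_k(A)\nabla f(\vx_0)$, where $P_k$ is a polynomial of degree $k$ with $P_k(0) = 1$ determined only by $\{\beta_{i,j}\}_{j\leq k}$. Choosing $\nabla f(\vx_0)$ in the eigenspace of a single eigenvalue $\lambda \in (0, L]$ reduces the ratio $\|\nabla f(\vx_K)\|^2/(L(f(\vx_0)-f(\vx^*)))$ to $2\lambda P_K(\lambda)^2/L$. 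The substitution $\lambda = L\mu^2$ converts $\sqrt{\lambda}\,P_K(\lambda)$ into an odd polynomial $R(\mu)$ of degree $2K+1$ with $R'(0) = \sqrt{L}$, and Bernstein's inequality at $\mu = 0$ yields $\max_{\mu \in [-1,1]}|R(\mu)| \geq \sqrt{L}/(2K+1)$ (tight at the Chebyshev polynomial $T_{2K+1}$), hence $\max_{\lambda \in (0,L]}\lambda P_K(\lambda)^2 = \Omega(L/K^2)$. This yields only $\|\nabla f(\vx_K)\|^2 = \Omega(L(f(\vx_0)-f(\vx^*))/K^2)$, consistent with what horizon-oblivious polynomial methods (conjugate gradient, Chebyshev iteration) actually attain on quadratics, so quadratics alone cannot witness the conjecture and the hard instance must be non-quadratic.

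\textbf{Stage 2 (non-quadratic lifting).} To upgrade the quadratic bound from $1/K^2$ to $1/K$, I would glue together a ``flat'' outer quadratic $f_{\mathrm{flat}}(\vx) = \frac{L}{2K}\|\vx - \vx_0\|^2$ of curvature $\Theta(L/K)$ with an inner scaled Nesterov tridiagonal worst-case quadratic supported on a $(K+1)$-dimensional subspace, via infimal convolution (equivalently, a Moreau-envelope regularization). Provided the smoothing parameters are chosen compatibly, the composite remains $L$-smooth and convex, with $f(\vx_0) - f(\vx^*) = \Theta(L\|\vx_0-\vx^*\|^2/K)$ governed by the shallow outer shape, while first-order iterates confined to a $K$-dimensional Krylov subspace are forced by the inner Nesterov structure to satisfy $\|\nabla f(\vx_K)\|^2 = \Omega(L^2\|\vx_0-\vx^*\|^2/K^2)$. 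Dividing the two bounds yields exactly $\|\nabla f(\vx_K)\|^2 = \Omega(L(f(\vx_0)-f(\vx^*))/K)$.

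\textbf{Main obstacle.} The crux of the proof is Stage 2. Moreau smoothing may blunt the sharp coordinate structure that powers the Nesterov lower bound, so the envelope parameter must be chosen carefully to simultaneously preserve $L$-smoothness, retain the quadratic depth near $\vx^*$, and keep the ``Krylov trapping'' property that pins $\vx_K$ to a $K$-dimensional subspace. Moreover, since the trajectory depends on $f$, one must argue that a horizon-oblivious scheme genuinely probes the Nesterov component rather than collapsing into the flat part, which is not automatic without knowledge of $K$. A cleaner alternative worth pursuing in parallel is a contradiction argument: if some horizon-oblivious $\mathcal{M}$ achieved $\|\nabla f(\vx_K)\|^2 = o(L(f(\vx_0)-f(\vx^*))/K)$ uniformly in $K$ and $f$, then composing $\mathcal{M}$ with a warm-start scheme on scaled instances ought to produce an optimality-gap rate beating the classical $\Omega(L\|\vx_0-\vx^*\|^2/K^2)$ Nesterov lower bound. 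Making this reduction precise without allowing $\mathcal{M}$'s coefficients to depend on the restart horizon is the delicate step, and is, I suspect, why the authors left the statement as a conjecture.
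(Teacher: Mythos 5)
You are attempting to prove a statement that the paper explicitly leaves as a \emph{conjecture}: the authors only remark that a weaker version (coefficients $\beta_{i,k}$ forced to be constant in $i$ and $k$) follows from the techniques of Arjevani and Shamir, and they note that even this weaker result excludes all FGM variants and hence is unsatisfying. There is therefore no paper proof to compare against, and you yourself concede in the final paragraph that you cannot close the gap. That self-assessment is correct; the proposal does not constitute a proof.

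Two more specific concerns with the content as written. First, your Stage~1 dismissal of quadratic instances is not justified. The Bernstein bound you invoke lower-bounds $\max_{\lambda\in(0,L]}\lambda\,P_K(\lambda)^2$ over \emph{arbitrary} degree-$K$ polynomials with $P_K(0)=1$, yielding $\Omega(L/K^2)$. But a horizon-oblivious method of the conjecture's form corresponds to a single \emph{fixed, nested} polynomial sequence $\{P_k\}$ built from the recursion $P_k(\lambda)=1-\lambda\sum_{i<k}\beta_{i,k}P_i(\lambda)$, and it is not established that this more constrained family can attain $O(L/K^2)$ uniformly in $K$. Your citation of conjugate gradient is inapplicable: CG's coefficients depend on inner products involving $f$, so it is not in the conjecture's class, and Chebyshev iteration requires spectral information. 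It is entirely plausible that the nestedness constraint already forces $\Omega(L/K)$ on quadratics, which would make the non-quadratic lifting of Stage~2 unnecessary; in any case you cannot rule quadratics out as hard instances without a matching upper bound for fixed nested sequences. Second, Stage~2 is a plan rather than an argument: the compatibility of Moreau smoothing with the Krylov-trapping structure, the claim that a horizon-oblivious trajectory is ``forced'' into the inner Nesterov component, and the warm-start reduction at the end are all acknowledged as delicate, and none is carried out. The honest conclusion is that the statement remains open, and your attempt correctly identifies why but does not resolve it.
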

The basis for this conjecture is the numerical evidence from~\cite{kim2018generalizing, kim2020optimizing}, which seems to suggest that fixing the total number of iterations $K$ and choosing the coefficients $\beta_{i, k}$ as a function $K$ is crucial to obtaining the optimal bound $\|\nabla f(\vx_K)\|^2 = O\Big(\frac{L(f(\vx_0) - f(\vx^*))}{K^2}\Big)$. We note that the lower bound from Conjecture~\ref{conj:gd-optimality} can be proved under a stricter condition on coefficients $\beta_{i, k}$ that essentially forces them to be constant (independent of $i$ and $k$), using the techniques of Arjevani and Shamir~\cite{arjevani2016iteration}. However, such a lower bound is weak as it not only excludes the optimal algorithm from~\cite{kim2020optimizing} (which is desired) but also all variants of Nesterov FGM considered in~\cite{kim2018generalizing}. 

%We further conjecture that a lower bound of $\Omega\big(\frac{L(f(\vx_0) - f(\vx^*))}{K}\big)$ also applies to $\min_{0\leq k \leq K}\|\nabla f(\vx_k)\|^2$ when $K$ is not fixed in advance (i.e., when it is not known to the algorithm). 
%If such a lower bound can be proved, then it would also imply that Nesterov FGM is optimal among methods that do not fix the number of steps in advance.   

%%%%%%%%%%%%%%%%%%%%%%%
\section{Small Gradients in Min-Max Optimization}\label{sec:min-max-opt}

In this section, we consider the problem of making the gradients small in convex-concave min-max optimization, under the assumption that the operator $F$ corresponding to the gradient of the objective is cocoercive (see Section~\ref{sec:prelims}). Similarly as in the case of convex optimization, the potential functions we consider trade off a notion of an optimality gap with the norm of $F.$ Further, the inequality corresponding to the cocoercivity assumption suffices to carry out the analysis of standard methods considered here; namely, the gradient descent-ascent method and Halpern iteration. We also show (in Section~\ref{sec:min-max-lb}) that these two methods are the best we can hope for when considering broad classes of methods that capture most of the standard optimization methods.   
%
%%%%%%%%%%%%%%%%%%%%%%%%%%
\subsection{Krasnosel'ski{\i}-Mann/Gradient Descent-Ascent}\label{sec:gda}
 Perhaps 
the simp\-lest potential function that can be considered for min-max optimization is 
\begin{equation}\label{eq:gda-pot-fun}
    \cc_k = A_k \|F(\vu_k)\|^2 + B_k \innp{F(\vu_k), \vu_k - \vu^*},
\end{equation}
which can be seen as a counterpart to the potential function used for gradient descent in the previous section. The method that is suitable for the analysis with this potential function is also the counterpart of gradient descent for min-max optimization---gradient descent-ascent (GDA), stated as
$$
    \vu_{k+1} = \vu_k - \eta_k F(\vu_k),
$$
where $\eta_k \in (0, \frac{2}{L}).$ 
This method is also equivalent to the well-known Krasnosel'ski{\i}-Mann iteration for finding fixed points of nonexpansive (1-Lipschitz) operators. In particular, given a nonexpansive operator $T:\rr^d \to\rr^d,$ the Krasnosel'ski{\i}-Mann iteration updates the iterates as
$$
    \vu_{k+1} = (1-\alpha_k)\vu_k + \alpha_k T(\vu_k), 
$$
where $\alpha_k \in (0, 1).$ 
It is a standard fact that $F$ is $\frac{1}{L}$-cocoercive if and only if $T(\cdot) = \cdot - \frac{2}{L}F(\cdot)$ is nonexpansive (see, e.g.,~\cite[Proposition 4.1]{bauschke2011convex}). Thus, if we apply the Krasnosel'ski{\i}-Mann iteration to $T(\cdot) = \cdot - \frac{2}{L}F(\cdot)$, we have
$$
    \vu_{k+1} = \vu_k - \frac{2\alpha_k}{L}F(\vu_k),
$$
which is precisely GDA with $\eta_k = \frac{2\alpha_k}{L}.$ 

For simplicity, in the following we analyze GDA with the step size $\eta_k = \eta = \frac{1}{L},$ which is the optimal step size for this method. The analysis however extends to any step sizes $\eta_k \in (0, \frac{2}{L})$ in a straightforward manner. The convergence result is summarized in the following lemma.

\begin{lemma}[Convergence of Gradient Descent-Ascent]
Let $F:\rr^d\to\rr^d$ be a $\frac{1}{L}$-cocoercive operator, $\vu_0 \in \rr^d$ be an arbitrary initial point, and let $\vu_{k+1} = \vu_k - \frac{1}{L}F(\vu_k)$ for $k\geq 0.$ Then, $\forall k \geq 1:$
$$
    \|F(\vu_k)\| \leq \frac{L\|\vu_0 - \vu^*\|}{\sqrt{k/2 + 1}},
$$
where $\vu^*$ is such that $F(\vu^*) = \zeros.$
\end{lemma}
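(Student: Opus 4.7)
The plan is to combine two standard consequences of cocoercivity into a single monotone potential. For the first, plugging the update $\vu_{k+1} = \vu_k - \tfrac{1}{L}F(\vu_k)$ into the expansion of $\|\vu_{k+1} - \vu^*\|^2$ and applying Eq.~\eqref{eq:minty-gap} (with $\vu = \vu_k$) immediately gives
\[
\|\vu_{k+1} - \vu^*\|^2 \;\leq\; \|\vu_k - \vu^*\|^2 - \tfrac{1}{L^2}\|F(\vu_k)\|^2,
\]
so the squared distance to $\vu^*$ decreases by at least $\|F(\vu_k)\|^2/L^2$ at every step.

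For the second ingredient, I would establish monotonicity of the sequence $\{\|F(\vu_k)\|\}$ using the same Cauchy--Schwarz trick that appears in the proof of Lemma~\ref{lem:gd-cvx}. Applying cocoercivity (Eq.~\eqref{eq:cocoercive-op}) to the pair $(\vu_{k+1},\vu_k)$ and substituting $\vu_{k+1} - \vu_k = -\tfrac{1}{L}F(\vu_k)$ produces
\[
\|F(\vu_{k+1}) - F(\vu_k)\|^2 \;\leq\; \innp{F(\vu_k) - F(\vu_{k+1}),\, F(\vu_k)},
\]
which rearranges to $\|F(\vu_{k+1})\|^2 \leq \innp{F(\vu_{k+1}), F(\vu_k)};$ Cauchy--Schwarz then yields $\|F(\vu_{k+1})\| \leq \|F(\vu_k)\|.$

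Combining the two, I would package the argument via the potential
\[
\cc_k \;=\; \sum_{i=0}^{k-1}\|F(\vu_i)\|^2 + L^2\|\vu_k - \vu^*\|^2,
\]
which is the min-max analog of the summation-type potential used for gradient descent in Section~\ref{sec:cvx-opt}. The contraction step above yields $\cc_{k+1} - \cc_k = \|F(\vu_k)\|^2 + L^2(\|\vu_{k+1}-\vu^*\|^2 - \|\vu_k - \vu^*\|^2) \leq 0,$ so $\cc_{k+1} \leq \cc_0 = L^2\|\vu_0 - \vu^*\|^2,$ which after dropping the nonnegative distance term gives $\sum_{i=0}^{k}\|F(\vu_i)\|^2 \leq L^2\|\vu_0-\vu^*\|^2.$ The monotonicity step then implies $(k+1)\|F(\vu_k)\|^2 \leq \sum_{i=0}^{k}\|F(\vu_i)\|^2 \leq L^2\|\vu_0-\vu^*\|^2,$ so $\|F(\vu_k)\| \leq \tfrac{L\|\vu_0-\vu^*\|}{\sqrt{k+1}};$ since $k+1 \geq k/2+1$ for every $k\geq 0,$ the stated bound follows.

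I do not anticipate any genuine obstacle: the distance contraction is the classical nonexpansiveness estimate for the resolvent-type step $\vu \mapsto \vu - \tfrac{1}{L}F(\vu),$ and the monotonicity of $\|F(\vu_k)\|$ is exactly the cocoercive-operator counterpart of the argument already carried out for $\|\nabla f(\vx_k)\|$ in Lemma~\ref{lem:gd-cvx}; everything else is direct bookkeeping.
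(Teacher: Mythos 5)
Your proof is correct, and in fact it yields a marginally sharper bound, $\|F(\vu_k)\| \leq L\|\vu_0-\vu^*\|/\sqrt{k+1}$, than the stated $L\|\vu_0-\vu^*\|/\sqrt{k/2+1}$. The route, however, is genuinely different from the paper's. The paper stays within the framework it is advertising: it uses the potential $\cc_k = A_k\|F(\vu_k)\|^2 + B_k\innp{F(\vu_k), \vu_k - \vu^*}$, the min-max analogue of $A_k\|\nabla f(\vx_k)\|^2 + f(\vx_k)$ from the convex section, with $B_k = 1$, $A_k = \tfrac{k}{2L}$; the key step there is to expand $\innp{F(\vu_k),\vu_k-\vu^*}$ via $F(\vu_k) = L(\vu_k - \vu_{k+1})$ so that a telescoping squared-distance term falls out, after which the bound follows from $\cc_k \geq \tfrac{k+2}{2L}\|F(\vu_k)\|^2$ (by Eq.~\eqref{eq:minty-gap}) and $\cc_k \leq L\|\vu_0-\vu^*\|^2$. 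You instead use the classical Fej\'er-monotonicity estimate $\|\vu_{k+1}-\vu^*\|^2 \leq \|\vu_k-\vu^*\|^2 - \tfrac{1}{L^2}\|F(\vu_k)\|^2$, package it as the summation-type potential $\sum_{i=0}^{k-1}\|F(\vu_i)\|^2 + L^2\|\vu_k-\vu^*\|^2$ (which parallels the paper's \emph{nonconvex} warm-up potential rather than its last-iterate convex one), and then upgrade the averaged bound to a last-iterate bound via monotonicity of $\|F(\vu_k)\|$. The monotonicity ingredient (cocoercivity plus Cauchy--Schwarz) is shared by both proofs. Your approach is more elementary and gives a slightly tighter constant; the paper's is less direct but is chosen to exhibit the gradient-norm vs.\ optimality-gap trade-off that is the unifying theme of the whole paper, with $\innp{F(\vu_k),\vu_k-\vu^*}$ playing the role of the gap.
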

\begin{proof}
The proof relies on showing that the potential function $\cc_k$ satisfies $\cc_k \leq \cc_{k-1} + E_k,$ where $E_k$ only contains terms that telescope, for suitably chosen sequences of positive numbers $\{A_k\}_{k \geq 0}$ and $\{B_k\}_{k\geq 0}.$ 

Let us start with bounding $\cc_0.$ As $\vu_1 = \vu_0 - \frac{1}{L}F(\vu_0),$ we have
\begin{align*}
    \cc_0 &= A_0 \|F(\vu_0)\|^2 + B_0 \innp{F(\vu_0), \vu_0 - \vu^*}\\
    &= A_0 \|F(\vu_0)\|^2 + B_0 L \innp{\vu_0 - \vu_1, \vu_0 - \vu^*}\\
    &= A_0 \|F(\vu_0)\|^2 + \frac{B_0 L}{2}\big(\|\vu_0 - \vu^*\|^2 - \|\vu_1 - \vu^*\|^2 + \|\vu_0 - \vu_1\|^2\big)\\
    &= \Big(A_0 + \frac{B_0}{2L}\Big)\|F(\vu_0)\|^2 + \frac{B_0 L}{2}\big(\|\vu_0 - \vu^*\|^2 - \|\vu_1 - \vu^*\|^2\big).
\end{align*}
Eq.~\eqref{eq:minty-gap} implies $\|F(\vu_0)\|^2 \leq L^2\|\vu_0 - \vu^*\|^2,$ and, thus, we have
\begin{equation}\label{eq:gda-init-pot}
    \cc_0 = \frac{A_0 L^2 + 2 B_0 L}{2}\|\vu_0 - \vu^*\|^2 - \frac{B_0 L}{2}\|\vu_1 - \vu^*\|^2.
\end{equation}

Now let us consider the change in the potential function $\cc_k - \cc_{k-1}.$ Note first that, by Eq.~\eqref{eq:minty-gap}, $\innp{F(\vu_{k-1}), \vu_{k-1} - \vu^*} \geq \frac{1}{L}\|F(\vu_{k-1})\|^2.$ Thus:
\begin{align*}
    \cc_k - \cc_{k-1} =\;& A_k \|F(\vu_k)\|^2 - A_{k-1}\|F(\vu_{k-1})\|^2 + B_k \innp{F(\vu_k), \vu_k - \vu^*}\\
    &- B_{k-1}\innp{F(\vu_{k-1}), \vu_{k-1} - \vu^*} \\
    \leq \; & A_k \|F(\vu_k)\|^2-\Big(A_{k-1} + \frac{B_{k-1}}{L}\Big)\|F(\vu_{k-1})\|^2 + B_k \innp{F(\vu_k), \vu_k - \vu^*}.
\end{align*}
Using that $F(\vu_k) = L(\vu_k - \vu_{k+1}),$ we have that $\innp{F(\vu_k), \vu_k - \vu^*} = \frac{1}{2L}\|F(\vu_k)\|^2 + \frac{L}{2}\|\vu_k - \vu^*\|^2 - \frac{L}{2}\|\vu_{k+1} - \vu^*\|^2,$ which leads to
\begin{align*}
    \cc_k - \cc_{k-1} \leq \; & \Big(A_k + \frac{B_k}{2L}\Big)\|F(\vu_k)\|^2 - \Big(A_{k-1} + \frac{B_{k-1}}{L}\Big)\|F(\vu_{k-1})\|^2\\
    &+ \frac{B_k L}{2}\|\vu_k - \vu^*\|^2 - \frac{B_k L}{2}\|\vu_{k+1} - \vu^*\|^2. 
\end{align*}
On the other hand, by Eq.~\eqref{eq:cocoercive-op} and $\vu_k = \vu_{k-1} - \frac{1}{L}F(\vu_{k-1})$, we have that $\|F(\vu_k)\|^2 \leq \innp{F(\vu_k), F(\vu_{k-1})},$ and, consequently, $\|F(\vu_k)\| \leq \|F(\vu_{k-1})\|.$ Thus, for $\cc_k - \cc_{k-1}$ to contain only telescoping terms, it suffices that $A_k + \frac{B_k}{2 L} - A_{k-1} - \frac{B_{k-1}}{L} \leq 0$ and that $\{B_k\}_{k \geq 0}$ is non-increasing. In particular, taking $B_k = 1$ and $A_{k+1} = A_{k} + \frac{1}{2L} = A_0 + \frac{k+1}{2L}$ for all $k \geq 0,$ we have
\begin{equation}\label{eq:gda-telescoping}
    \cc_k - \cc_{k-1} \leq \frac{L}{2}\|\vu_k - \vu^*\|^2 - \frac{L}{2}\|\vu_{k+1} - \vu^*\|^2.
\end{equation}
Telescoping Eq.~\eqref{eq:gda-telescoping} and combining with Eq.~\eqref{eq:gda-init-pot}, we then get
$$
    \cc_k \leq \frac{A_0 L^2 + 2 L}{2}\|\vu_0 - \vu^*\|^2 - \frac{L}{2}\|\vu_{k+1} - \vu^*\|^2 \leq \frac{A_0 L^2 + 2 L}{2}\|\vu_0 - \vu^*\|^2.
$$
Taking $A_0 = 0$ and observing that, by Eq.~\eqref{eq:minty-gap}, $\cc_k \geq \big(A_k + \frac{B_k}{L}\big)\|F(\vu_k)\|^2 = \frac{k + 2}{2L} \|F(\vu_k)\|^2,$ we finally get
$$
    \|F(\vu_k)\|^2 \leq \frac{2L^2\|\vu_0 - \vu^*\|^2}{k + 2}.
$$
It remains to take the square-root on both sides of the last inequality. 
\end{proof}
 %
%%%%%%%%%%%%%%%%%%%%%%%%%%
\subsection{Halpern Iteration}\label{sec:halpern}
It seems reasonable now to ask whether it is possible to obtain faster rates than for GDA by considering a different potential function that trades off the gradient/operator norm for a notion of an optimality gap w.r.t.~an anchor point, similar to how we obtained faster rates for convex optimization. It turns out that the answer is ``yes,'' using the initial point $\vu_0$ as the anchor. The resulting potential function is 
$$\cc_k = A_k \|F(\vu_k)\|^2 + B_k \innp{F(\vu_k), \vu_k - \vu_0}$$
and it corresponds to the well-known Halpern iteration
\begin{equation}\label{eq:halpern-iteration}
    \vu_{k+1} = \lambda_{k+1} \vu_0 + (1-\lambda_{k+1})T(\vu_k),
\end{equation}
where, similarly as in the case of GDA, $T(\cdot) = \cdot - \frac{2}{L}F(\cdot)$ is a nonexpansive operator. We note that a similar potential function was used in~\cite{diakonikolas2020halpern} to analyze the convergence of Halpern iteration.

Here we show that the potential function $\cc_k = A_k \|F(\vu_k)\|^2 + B_k \innp{F(\vu_k), \vu_k - \vu_0}$ in fact leads to the Halpern iteration as a natural algorithm that guarantees that $\cc_k$ is non-increasing. The main convergence result is summarized in the following lemma, whose proof reveals how the chosen potential function leads to the Halpern iteration. 

\begin{lemma}[Convergence of Halpern Iteration]\label{lemma:halpern}
Let $F:\rr^d\to\rr^d$ be a $\frac{1}{L}$-cocoercive operator, $\vu_0 \in \rr^d$ be an arbitrary initial point, and, for $k \geq 0,$ let
$$
    \vu_{k+1} = \frac{1}{k+1}\vu_0 + \frac{k}{k+1}\Big(\vu_k - \frac{2}{L}F(\vu_k)\Big).
$$
Then, $\forall k \geq 1,$ we have
$$
    \|F(\vu_k)\| \leq \frac{L\|\vu_0 - \vu^*\|}{k+1},
$$
where $\vu^*$ satisfies $F(\vu^*) = \zeros.$
\end{lemma}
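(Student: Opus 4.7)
The plan is to mirror the strategy used for OGM-G in the convex case: adopt the potential function $\cc_k = A_k\|F(\vu_k)\|^2 + B_k\innp{F(\vu_k), \vu_k - \vu_0}$ with $A_k$ of order $(k+1)^2$ and $B_k$ of order $k+1$, show that $\cc_k$ is non-increasing, and then convert $\cc_k \leq \cc_0$ into the advertised bound on $\|F(\vu_k)\|$. Exactly as in the OGM-G proof, the anchor point is fixed (here $\vu_0$) and all progress is measured relative to it.

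To establish the non-increasing property, I would first rewrite the prescribed iteration as $(k+1)(\vu_{k+1} - \vu_0) = k(\vu_k - \vu_0) - \frac{2k}{L}F(\vu_k)$, which expresses $\vu_{k+1} - \vu_k$ as an affine combination of $\vu_k - \vu_0$ and $F(\vu_k)$. Next, I would expand $\cc_{k+1} - \cc_k$ and inject the only available structural inequality, cocoercivity of $F$ from Eq.~\eqref{eq:cocoercive-op} applied to $(\vu_{k+1}, \vu_k)$:
\[
    \innp{F(\vu_{k+1}) - F(\vu_k), \vu_{k+1} - \vu_k} \geq \tfrac{1}{L}\|F(\vu_{k+1}) - F(\vu_k)\|^2.
\]
Substituting the explicit form of $\vu_{k+1} - \vu_k$ turns the left-hand side into a linear combination of $\innp{F(\vu_{k+1}), \vu_k - \vu_0}$, $\innp{F(\vu_k), \vu_k - \vu_0}$, $\innp{F(\vu_{k+1}), F(\vu_k)}$, and $\|F(\vu_k)\|^2$. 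After regrouping, matching the coefficient of $\innp{F(\vu_{k+1}), \vu_k - \vu_0}$ against what already appears in $\cc_{k+1}-\cc_k$ pins down the weight $\lambda_{k+1} = 1/(k+1)$ (thereby \emph{recovering} Halpern rather than imposing it by hand) and fixes the growth of $A_k$ and $B_k$ so that all leftover terms are either non-positive or combine into $-\frac{1}{L}\|F(\vu_{k+1}) - F(\vu_k)\|^2$ and are absorbed by the cocoercivity slack.

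Once $\cc_k \leq \cc_0$ is in hand, bounding $\cc_0$ is immediate: $\innp{F(\vu_0), \vu_0 - \vu_0} = 0$ and Eq.~\eqref{eq:minty-gap} together with Cauchy--Schwarz yield $\|F(\vu_0)\| \leq L\|\vu_0 - \vu^*\|$, so $\cc_0 = A_0 \|F(\vu_0)\|^2 \leq A_0 L^2 \|\vu_0 - \vu^*\|^2$. For a lower bound on $\cc_k$ in terms of $\|F(\vu_k)\|$ alone, I would split $\innp{F(\vu_k), \vu_k - \vu_0} = \innp{F(\vu_k), \vu_k - \vu^*} + \innp{F(\vu_k), \vu^* - \vu_0}$, lower bound the first piece by $\frac{1}{L}\|F(\vu_k)\|^2$ via Eq.~\eqref{eq:minty-gap} and the second by $-\|F(\vu_k)\|\,\|\vu_0-\vu^*\|$ via Cauchy--Schwarz. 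This yields a quadratic inequality in $\|F(\vu_k)\|$ whose solution, together with $A_k = \Theta((k+1)^2/L)$ and $B_k = \Theta(k+1)$, gives the claimed $\|F(\vu_k)\| \leq L\|\vu_0 - \vu^*\|/(k+1)$.

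The main obstacle I anticipate is the coefficient-matching step: the potential function has only two free scalar sequences, yet after applying cocoercivity and substituting the iteration, several distinct bilinear terms must cancel \emph{simultaneously} for $\cc_{k+1} - \cc_k \leq 0$ to hold. It is precisely this overdetermined-looking system that forces $\lambda_{k+1} = 1/(k+1)$ and dictates the admissible quadratic/linear scalings of $A_k$ and $B_k$; verifying that everything closes consistently---without the need for an extra distance-to-$\vu^*$ term as appeared in the GDA analysis of Section~\ref{sec:gda}---is the algebraic crux of the argument.
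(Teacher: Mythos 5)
Your proposal follows the paper's proof essentially step for step: the same potential function $\cc_k = A_k\|F(\vu_k)\|^2 + B_k\innp{F(\vu_k),\vu_k-\vu_0}$, the same use of cocoercivity between consecutive iterates, the same coefficient-matching that forces the Halpern update and the growth constraint on $A_k, B_k$, and the same final step of splitting $\innp{F(\vu_k),\vu_k-\vu_0}$ through $\vu^*$ and applying Eq.~\eqref{eq:minty-gap} together with Cauchy--Schwarz to get a quadratic in $\|F(\vu_k)\|$. The only small imprecision is writing $A_k = \Theta\big((k+1)^2/L\big)$: the choice that actually closes the argument is $A_k = k(k+1)/L$ with $A_0 = 0$, which makes $\cc_0 = 0$ and turns your quadratic into the clean degree-one inequality $\big(A_k + B_k/L\big)\|F(\vu_k)\| \le B_k\|\vu_0-\vu^*\|$ yielding exactly $\|F(\vu_k)\| \le L\|\vu_0-\vu^*\|/(k+1)$ with no leftover constant.
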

\begin{proof}
The claim trivially holds if $\|F(\vu_k)\| = 0,$ so assume throughout that $\|F(\vu_k)\|\neq 0.$

Consider bounding $\cc_k - \cc_{k-1}$ above by zero. To do so, we can only rely on cocoercivity of $F$ from Eq.~\eqref{eq:cocoercive-op}. Applying Eq.~\eqref{eq:cocoercive-op} with $\vu = \vu_k$ and $\vv = \vu_{k-1}$ and rearranging the terms, we have
\begin{equation}\label{eq:halpern-equiv-cocoercive}
\begin{aligned}
    \frac{1}{L}\|F(\vu_k)\|^2 \leq\;& \innp{F(\vu_k), \vu_k - \vu_{k-1} + \frac{2}{L}F(\vu_{k-1})}\\
    &- \innp{F(\vu_{k-1}), \vu_{k} - \vu_{k-1}} - \frac{1}{L}\|F(\vu_{k-1})\|^2. 
\end{aligned}
\end{equation}
Combining Eq.~\eqref{eq:halpern-equiv-cocoercive} with the definition of $\cc_k$ and grouping appropriate terms, we have
\begin{equation}\label{eq:halpern-pot-change}
\begin{aligned}
    \cc_k - \cc_{k-1} \leq \;& \innp{F(\vu_k), A_k L \Big(\vu_k - \vu_{k-1} + \frac{2}{L}F(\vu_{k-1})\Big) + B_k(\vu_k - \vu_0)}\\
    &- \innp{F(\vu_{k-1}), A_k L (\vu_{k} - \vu_{k-1}) + B_{k-1}(\vu_{k-1} - \vu_{0})}\\
    &- \frac{A_k + A_{k-1}}{L}\|F(\vu_{k-1})\|^2. 
\end{aligned}
\end{equation}
For $\vu_k$ to be explicitly defined, it cannot depend on $F(\vu_k)$. Thus, the only direct way to make the first line of the right-hand side of Eq.~\eqref{eq:halpern-pot-change} non-positive is to set 
\begin{equation}\label{eq:halpern-cond-1}
    A_k L \Big(\vu_k - \vu_{k-1} + \frac{2}{L}F(\vu_{k-1}))\Big) + B_k(\vu_k - \vu_0) = 0.
\end{equation}
For the remaining terms, it suffices that
\begin{equation}\label{eq:halpern-cond-2} 
    - \innp{F(\vu_{k-1}), A_k L (\vu_{k} - \vu_{k-1}) + B_{k-1}(\vu_{k-1} - \vu_{0})} - ({A_k + A_{k-1}})\|F(\vu_{k-1})\|^2 \leq 0.
\end{equation}
Rearranging Eq.~\eqref{eq:halpern-cond-1} gives the Halpern algorithm from Eq.~\eqref{eq:halpern-iteration} with $\lambda_{k} = \frac{B_k}{A_k L + B_k},$ i.e.,
\begin{equation}\label{eq:halpern-it-equiv}
    \vu_k = \frac{B_k}{A_k L + B_k}\vu_0 + \frac{A_k L }{A_k L + B_k}\Big(\vu_{k-1} - \frac{2}{L}F(\vu_{k-1})\Big).
\end{equation}
The other condition (from Eq.~\eqref{eq:halpern-cond-2}) effectively constrains the growth of $A_k$ compared to $B_k,$ which is expected, as otherwise we would be able to prove an arbitrarily fast convergence rate for Halpern iteration, which is impossible, due to existing lower bounds (see, e.g.~\cite{diakonikolas2020halpern}).

Combining Eq.~\eqref{eq:halpern-cond-1} and Eq.~\eqref{eq:halpern-cond-2}, we have
\begin{align*}
    -\innp{F(\vu_{k-1}), A_k L \vu_k - B_{k-1}\vu_0 - (A_k L - B_{k-1})\vu_{k-1}} \leq (A_k + A_{k-1})\|F(\vu_{k-1})\|^2. 
\end{align*}
Now, to be able to guarantee that the last inequality is satisfied and consistent with Eq.~\eqref{eq:halpern-it-equiv}, it is required that 
\begin{equation}\label{eq:halpern-growth-condition}
    \frac{B_{k-1}}{A_k L} = \frac{B_k}{A_k L + B_k}\quad \text{ and } \quad \frac{2 A_k}{A_k L + B_k} \leq \frac{A_{k} + A_{k-1}}{A_k L}.
\end{equation}
In particular, when $B_k = k+1$ and $A_k = \frac{k(k+1)}{L},$ both conditions from Eq.~\eqref{eq:halpern-growth-condition} are satisfied with equality.

Hence, for $B_k = k+1,$ $A_k = \frac{k(k+1)}{L},$ and $\lambda_k = \frac{B_k}{A_k L + B_k} = \frac{1}{k+1},$ we have that $\cc_k \leq \cc_0.$ By definition, and as $A_0 = 0,$ we have that $\cc_0 = 0$. Thus, $\cc_k \leq 0,$ $\forall k \geq 1,$ and it follows that
\begin{align*}
    \|F(\vu_k)\|^2 &\leq \frac{B_k}{A_k}\innp{F(\vu_k), \vu_0 - \vu_k}\\
    &= \frac{L}{k}\big(\innp{F(\vu_k), \vu^* - \vu_k} + \innp{F(\vu_k), \vu_0 - \vu^*} \big)\\
    &\leq \frac{L}{k}\Big( - \frac{1}{L}\|F(\vu_k)\|^2 + \|F(\vu_k)\|\|\vu_0 - \vu^*\|\Big),
\end{align*}
where the last inequality is by Eq.~\eqref{eq:minty-gap} and Cauchy-Schwarz. To complete the proof, it remains to rearrange the last inequality and divide both sides by $\|F(\vu_k)\|.$
\end{proof}
%
%%%%%%%%%%%%%%%%%%%%%%%%
\subsection{Lower Bounds for Cocoercive Operators}\label{sec:min-max-lb}
In this section, we provide a lower bound that applies to the class of algorithms that construct their iterates as the sum of an initial point and a linear combination of the cocoercive operator $F:\rr^d\to\rr^d$ evaluated at any of the points seen up to the current iteration. In particular, given a $\frac{1}{L}$-cocoercive operator $F: \rr^d \to \rr^d,$ an algorithm's iterate $\vu_k$ at iteration $k$ can be expressed as 
\begin{equation}\label{eq:gen-algos-lb}
    \vu_k = \vu_{0} - \sum_{i=0}^{k-1}\beta_{i, k}F(\vu_i),
\end{equation}
where $\beta_{i, k}$ are real coefficients that can depend on $L$ but are otherwise independent of $F$. To state the lower bound, we use $\cF_{L, D}$ to denote the class of problems with $\frac{1}{L}$-cocoercive operators $F$ that satisfy $\|\vu^* - \vu_0\| \leq D,$ where $\vu_0 \in \rr^d$ is an arbitrary  initial point and $\vu^*$ is such that $F(\vu^*) = \zeros.$ We  assume w.l.o.g.~that $d$ is even.

To derive the lower bound, we use the framework developed in~\cite{arjevani2016lower,arjevani2016iteration}. To make use of this framework, which relies on the use of Chebyshev polynomials, it is necessary to construct hard instances corresponding to linear operators $F(\vu) = \mA \vu + \vb,$ where $\mA \in \rr^{d \times d}$ and $\vb \in \rr^d.$ We note that such an approach was also used in~\cite{golowich2020last} for the class of monotone Lipschitz operators. However, here we aim to provide a lower bound for the more restricted class of cocoercive operators, which necessitates a separate construction. In particular, the monotone operator from the lower bound instance used in~\cite{golowich2020last} is not cocoercive as it corresponds to a bilinear function; in fact, it satisfies $\innp{F(\vu)- F(\vv), \vu - \vv} = 0,$ $\forall \vu, \vv \in \rr^d.$ 

Before delving into the technical details of our lower bound, we first provide definitions and supporting claims from~\cite{arjevani2016lower} that are needed for stating and proving it. A useful definition is that of 1-SCLI algorithms, which allows abstracting algorithms of the form from Eq.~\eqref{eq:gen-algos-lb} through the lens of Chebyshev polynomials. Here, we adopt the terminology from~\cite{golowich2020last}, which somewhat blurs the lines between various definitions (of stationary, oblivious, $p$-SCLI) algorithm types from~\cite{arjevani2016iteration,arjevani2016lower}, but provides perhaps the simplest way of stating the results.  

\begin{definition}[1-SCLI Algorithms]\label{def:1-SCLI-algo}
 An optimization algorithm $\cA$ acting on the class of linear operators $F:\rr^d \to \rr^d$ of the form $F(\vu) = \mA \vu + \vb$, where $\mA \in \rr^{d\times d},$ $\vb \in \rr^d,$ is said to be $1$-stationary  canonical linear iterative ($1$-SCLI) over $\rr^d$ if, given an initial point $\vu_0 \in \rr^d$, there exist mappings $C_0(\mathbf{A}), N(\mathbf{A}) : \rr^{d\times d} \to \rr^{d\times d}$ such that for all $k \geq 1$ the iterates of $\cA$ can be expressed as
 $$
    \vu_k = C_0(\mA) \vu_{k-1} + N(\mA)\vb. 
 $$
\end{definition}
Observe here that Definition~\ref{def:1-SCLI-algo} imposes no restrictions on what kind of mappings $C_0$ and $N$ can be. In particular, they can be polynomials of an arbitrary degree. This is important because choosing polynomials of degree $K$ would allow us to emulate arbitrary algorithms of the form from Eq.~\eqref{eq:gen-algos-lb} run over $K$ iterations, as $F$ is assumed to be linear (this observation is typically used in the analysis of the classical conjugate gradient method; see, e.g.,~\cite[Chapter 5]{nocedal2006numerical}). On the other hand, restricting the degree of the polynomials would restrict the adaptivity of coefficients $\beta_{i, k}$, as $C_0, N$ remain fixed for all $k.$ In this context, both GDA and Halpern iteration (when restricted to be run over a fixed number $K$ of iterations) can be viewed as 1-SCLI algorithms, with the following crucial difference. For GDA with a fixed step size $\eta$, we have
$$
    \vu_k = (\mI - \eta\mA)\vu_{k-1} + \eta \vb,
$$
i.e., $C_0$ is of degree one and $N$ is of degree zero. On the other hand, for Halpern iteration, 
\begin{align}\label{eq:halpern-linear}
    \vu_k &= \lambda_k \vu_0 + (1-\lambda_k)\Big(\mI - \frac{2}{L}\mA\Big)\vu_{k-1} + (1-\lambda_k)\vb.
\end{align}
By recursively applying Eq.~\eqref{eq:halpern-linear} and rolling it down to zero, we get that $\vu_k$ can be expressed as $\vu_k = C_0(\mA)\vu_0 + N(\mA)\vb$ using $C_0$ that is a polynomial of degree $k$ and $N$ that is a polynomial of degree $k-1.$ In other words, we can view $k$ iterations of Halpern's algorithm as one iteration of a 1-SCLI algorithm, using polynomial maps $C_0$ and $N$ of suitably large degrees. 
This is crucial for understanding the statement of the lower bound, which will effectively tell us that GDA is iteration complexity-optimal among all algorithms of the form from Eq.~\eqref{eq:gen-algos-lb} that choose steps sizes $\beta_{i, k}$ independently of $k,$ while Halpern iteration is iteration complexity-optimal over all algorithms that are allowed to adapt $\beta_{i, k}$'s to $k.$

In the following, we further restrict our attention to operators $F$ corresponding to full-rank matrices $\mA.$ This is convenient because the optimal solution $\vu^*$ for which $F(\vu^*) = \zeros$ can be expressed in closed form as $\vu^* = -\mA^{-1}\vb.$ This allows us to relate the polynomials $C_0$ and $N$ under a minimal (and standard~\cite{arjevani2016iteration,arjevani2016lower,golowich2020last}) assumption that the 1-SCLI algorithms we consider are \emph{consistent} (or convergent). We note here that the consistency condition is not necessary; it is rather the case that the proof relies on the relationship between $C_0$ and $N$ from Eq.~\eqref{eq:consistency}, for which the natural consistency condition suffices. 
\begin{definition}[Consistency]\label{def:consistency}
A 1-SCLI algorithm $\cA$ is said to be consistent w.r.t.~a full-rank matrix $\mA$ if for any $\vb \in \rr^d$ we have that  $\vu_k$ converges to $\vu^* = -\mA^{-1}\vb$. A 1-SCLI algorithm is said to be consistent if it is consistent w.r.t.~any full-rank matrix $\mA.$
\end{definition}
The relationship between $C_0$ and $N$ for consistent algorithms is characterized by the following lemma.
\begin{lemma}[Consistency of 1-SCLI Algorithms~\cite{arjevani2016lower}]
If a 1-SCLI algorithm is consistent w.r.t.~$\mA$, then 
\begin{equation}\label{eq:consistency}
    C_0(\mA)=\mI+N(\mA)\mA. 
\end{equation}
\end{lemma}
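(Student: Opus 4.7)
The plan is to exploit the fixed-point equation forced by the consistency assumption. Since $\vu_k \to \vu^{*} = -\mA^{-1}\vb$ by hypothesis, and the iteration map $\vw \mapsto C_0(\mA)\vw + N(\mA)\vb$ is continuous (it is an affine map of $\vw$), taking limits on both sides of the identity $\vu_k = C_0(\mA)\vu_{k-1} + N(\mA)\vb$ shows that $\vu^{*}$ must itself be a fixed point of this map. This is the only place where consistency is used, and it is the main (mild) subtlety in the argument.

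Next I would substitute $\vu^{*} = -\mA^{-1}\vb$ into the fixed-point equation, yielding
\begin{equation*}
    -\mA^{-1}\vb = -C_0(\mA)\mA^{-1}\vb + N(\mA)\vb.
\end{equation*}
Because consistency is assumed with respect to the given full-rank $\mA$ for \emph{every} choice of $\vb \in \rr^d$, the above equality must hold as an identity of linear operators in $\vb$. Therefore I may equate the matrix coefficients, obtaining
\begin{equation*}
    -\mA^{-1} = -C_0(\mA)\mA^{-1} + N(\mA).
\end{equation*}

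Finally, since $\mA$ is full-rank, multiplying on the right by $\mA$ and rearranging gives
\begin{equation*}
    C_0(\mA) = \mI + N(\mA)\mA,
\end{equation*}
which is the desired identity. No difficult estimates are needed; the whole argument hinges on promoting the limit of the iteration to a fixed-point relation and then using that $\vb$ is arbitrary, so I do not expect any serious obstacle beyond making sure the continuity step and the ``for all $\vb$'' quantifier are spelled out cleanly.
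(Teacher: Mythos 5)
The paper does not reprove this lemma—it cites it directly from Arjevani and Shamir—so there is no internal proof to compare against. Your argument is the standard and correct one: pass to the limit in the affine iteration map (using its continuity) to obtain the fixed-point relation $\vu^* = C_0(\mA)\vu^* + N(\mA)\vb$, substitute $\vu^* = -\mA^{-1}\vb$, invoke the quantifier over all $\vb$ to promote the vector identity to a matrix identity, and then right-multiply by $\mA$ (legitimate since $\mA$ is full rank) to land on $C_0(\mA) = \mI + N(\mA)\mA$. This matches the reasoning in the cited source, and there are no gaps.
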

Finally, the following auxiliary lemma will be useful when proving our lower bound.
\begin{lemma}[{\cite[Lemma 13]{golowich2020last}}]\label{lem:poly-lower-bound}
Let $L >0,$ let $p$ and $k$ be arbitrary but fixed  non-negative integers, and let $r(y)$ be a polynomial with real-valued coefficients of degree at most $p$, such that $r(0) = 1$. Then:
\begin{equation}\label{ineq:poly-lower-bound}
    \sup_{y\in (0,L]}y|r(y)|^k\geq\sup_{y\in[L/(20p^2k), L]}y|r(y)|^k > \frac{L}{40p^2k}. 
\end{equation}
\end{lemma}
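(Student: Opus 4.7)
The first inequality is immediate from the inclusion $[L/(20p^2k),L] \subset (0,L]$, so the whole content lies in the strict lower bound on the right. Writing $a = L/(20p^2k)$ and $b = L$, my plan is to argue by contradiction: assume $y\,|r(y)|^k \leq L/(40p^2k)$ for every $y \in [a,b]$.

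Dividing by $y \geq a$ and taking $k$-th roots immediately gives $|r(y)| \leq 2^{-1/k}$ uniformly on $[a,b]$. I would then invoke Chebyshev's classical extremal theorem: among all polynomials of degree at most $p$ with $r(0) = 1$, the minimum sup-norm on the interval $[a,b] \subset (0,\infty)$ is attained by an affinely rescaled Chebyshev polynomial of the first kind and equals $1/T_p\!\left(\tfrac{b+a}{b-a}\right)$. Applied to $r$, this yields
\begin{equation*}
    T_p\!\left(\tfrac{b+a}{b-a}\right) \;\geq\; \frac{1}{\max_{y\in[a,b]}|r(y)|} \;\geq\; 2^{1/k},
\end{equation*}
so the entire argument reduces to proving the strict reverse inequality $T_p\!\left(\tfrac{b+a}{b-a}\right) < 2^{1/k}$, which then contradicts the above and closes the loop.

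The remaining step is a concrete estimate of $T_p(1+\epsilon)$ with $\epsilon = 2a/(b-a)$; for the given $a, b$ this gives $\epsilon \leq c_1/(p^2 k)$ for an explicit constant $c_1$. I plan to use the representation $T_p(\cosh\theta) = \cosh(p\theta)$ together with the elementary bounds $\mathrm{arcosh}(1+\epsilon) \leq \sqrt{2\epsilon + \epsilon^2}$ and $\cosh(x) \leq e^{x^2/2}$, which combine to give $T_p(1+\epsilon) \leq e^{c_2/k}$ for some absolute constant $c_2$. The main obstacle I anticipate is a careful calibration of these constants so that the specific numbers $20$ and $40$ appearing in the statement are recovered exactly: one must verify $c_2 < \ln 2$ under the stated parameter choices, which is tight enough to require explicit arithmetic rather than loose asymptotics, and possibly a separate treatment of very small $p$ or $k$. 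Conceptually, however, nothing beyond Chebyshev extremality and elementary hyperbolic estimates is needed, and the whole proof should fit within a page once the constants are tracked.
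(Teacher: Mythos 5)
This paper does not prove the lemma itself; it imports it verbatim as Lemma 13 of Golowich et al.~(2020), so there is no in-paper proof for your argument to be compared against. Your plan is nonetheless a sound and essentially complete route to the result, and it rests on the same two pillars one would expect in the original: (i) the first inequality is trivial from set inclusion, and (ii) the Chebyshev extremal characterization $\min\{\max_{[a,b]}|q| : \deg q \leq p,\ q(0)=1\} = 1/T_p\bigl(\tfrac{b+a}{b-a}\bigr)$, combined with an upper estimate on $T_p$ near $1$.

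I checked the arithmetic you defer to the end: with $a = L/(20p^2 k)$, $b = L$, one has $\epsilon = 2a/(b-a) = 2/(20p^2k-1)$, and
\begin{equation*}
T_p(1+\epsilon) \leq \cosh\bigl(p\sqrt{2\epsilon+\epsilon^2}\bigr) \leq \exp\Bigl(\tfrac{p^2(2\epsilon+\epsilon^2)}{2}\Bigr) = \exp\Bigl(\tfrac{40p^4k}{(20p^2k-1)^2}\Bigr),
\end{equation*}
so the contradiction requires $\tfrac{40p^4k^2}{(20p^2k-1)^2} < \ln 2$. Writing $m = p^2k \geq 1$, the left side is $40m^2/(20m-1)^2$, which is decreasing in $m$ and attains its maximum $40/361 \approx 0.11$ at $m=1$, comfortably below $\ln 2 \approx 0.69$. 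So the constants close with a factor of about $6$ to spare; no delicate calibration or separate small-$p$/small-$k$ case is needed, contrary to what you anticipated. One point to note: the hypothesis ``non-negative integers'' should really be ``positive integers'' ($p=0$ or $k=0$ makes the right-hand side undefined), and your argument implicitly and correctly assumes $p,k\geq 1$.
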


We are now ready to state and prove our lower bound.
\begin{theorem}\label{thm:lower-bound-cocoercive-op}
Let $p, K$ be any two positive integer numbers, and let $L, D > 0.$ Then, for any consistent 1-SCLI algorithm $\cA$ acting on instances from $\cF_{L, D}$, initialized at $\vu_0 = \zeros$ and for which $N(\mA)$ is a matrix polynomial of degree at most $p -1$, %there exists $F \in \cF_{L, D}$ such that 
%Let $\mathcal{A}$ be a consistent 1-SCLI algorithm and suppose that $N(\mathbf{A})$ is a matrix polynomial of degree at most $k-1$ for some $k \in \mathbb{N}$ with real coefficients. Fix $L, D >0$ and initial value $\vz^{(0)} = 0$. Then, for the objective $\phi\in \mathcal{F}^{co}_{L,D}$ and loss function $\mathcal{L}_F(\vz^{(T)}) = \norm{F(\vz^{(T)})}$, the lower bound of the iteration complexity is:
$$
   \sup_{F \in \cF_{L, D}} \|F(\vu_K)\| \geq \frac{LD}{4p\sqrt{5K}}.
$$
%\begin{equation}\label{ineq: lower-bound-1scli}
%    IC(\mathcal{A}, \mathcal{L}_F; T) \geq \frac{LD}{\sqrt{80}k}\frac{1}{\sqrt{T}}
%\end{equation}
\end{theorem}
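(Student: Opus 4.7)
The plan is to reduce the lower bound to a one-dimensional complex-scalar problem and then invoke Lemma~\ref{lem:poly-lower-bound} after a change of variables that produces a real auxiliary polynomial.

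First, I would use the consistency identity $C_0(\mA)=\mI+N(\mA)\mA$ from Eq.~\eqref{eq:consistency} to check by induction that $\vu_K-\vu^*=C_0(\mA)^K(\vu_0-\vu^*)$, and hence $F(\vu_K)=\mA\,C_0(\mA)^K(\vu_0-\vu^*)$. For the hard instance I would take $\mA\in\rr^{d\times d}$ block-diagonal, whose first $2\times 2$ block equals $\bigl(\begin{smallmatrix}\mathrm{Re}(z) & \mathrm{Im}(z)\\ -\mathrm{Im}(z) & \mathrm{Re}(z)\end{smallmatrix}\bigr)$ for a complex parameter $z$ and whose remaining blocks equal $L\mI$ (to keep $\mA$ full rank). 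A direct computation shows that $\tfrac{1}{L}$-cocoercivity of $F(\vu)=\mA\vu+\vb$ is equivalent to $|z-L/2|\leq L/2$, i.e., $z\in\overline{D(L/2,L/2)}\subset\mathbb{C}$. Choosing $\vb$ supported on the first block so that $\|\vu^*\|=D$ and using that blocks of the form $\alpha\mI+\beta\bigl(\begin{smallmatrix}0&1\\-1&0\end{smallmatrix}\bigr)$ act as multiplication by $\alpha+i\beta\in\mathbb{C}$ (with operator norm $|\alpha+i\beta|$), I would obtain
\[
\|F(\vu_K)\|=D\,|z|\,|C_0(z)|^K.
\]

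Second, since $zC_0(z)^K$ is analytic in $z$, the maximum modulus principle restricts the supremum to the boundary circle $|z-L/2|=L/2$, on which the cocoercivity-saturation identity $|z|^2=L\,\mathrm{Re}(z)$ holds. I parametrize this circle by $y:=|z|^2\in[0,L^2]$, so that $z$ is determined up to complex conjugation by $z+\bar z=2y/L$ and $z\bar z=y$. Defining $Q(y):=|C_0(z(y))|^2=C_0(z)C_0(\bar z)$, I would verify that $Q$ is a real polynomial in $y$ of degree at most $p$ with $Q(0)=1$ and $Q(y)\geq 0$ on $[0,L^2]$. Expanding $C_0(z)C_0(\bar z)=\sum_{j,k=0}^{p} c_j c_k\,z^j\bar z^k$ and symmetrizing pairs $(j,k)$ with $(k,j)$ produces terms of the form $c_j c_k\,y^{\min(j,k)}\bigl(z^{|j-k|}+\bar z^{|j-k|}\bigr)$; the Newton recursion $p_m=(2y/L)p_{m-1}-y\,p_{m-2}$ (with $p_0=2$, $p_1=2y/L$) for $p_m:=z^m+\bar z^m$ then shows each $p_m$ is a polynomial in $y$ of degree at most $m$. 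Consequently $\deg Q\leq\max_{0\leq j,k\leq p}\max(j,k)=p$, and $|z|\,|C_0(z)|^K=\sqrt{y\,Q(y)^K}$.

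Finally, I would apply Lemma~\ref{lem:poly-lower-bound} to $r:=Q$ with exponent $K$ and upper endpoint $L^2$ in place of $L$ (via the trivial rescaling $y\mapsto L^2 t$), giving $\sup_{y\in(0,L^2]}y\,Q(y)^K>L^2/(40p^2 K)$. Taking square roots,
\[
\sup_{z\in\overline{D(L/2,L/2)}}|z|\,|C_0(z)|^K>\frac{L}{p\sqrt{40K}}\geq\frac{L}{4p\sqrt{5K}},
\]
where the last step uses $\sqrt{40}\leq\sqrt{80}=4\sqrt{5}$; multiplying by $D$ yields the theorem. The main obstacle is the algebraic verification in the middle paragraph that $|C_0(z(y))|^2$ is genuinely a polynomial of degree at most $p$ in $y$---this is what makes the reduction to Lemma~\ref{lem:poly-lower-bound} work and, crucially, what converts the ``trivial'' real-axis bound $\Omega(L/(p^2 K))$ into the $\Omega(L/(p\sqrt{K}))$ bound via the identity $|z|^2=L\,\mathrm{Re}(z)$ on the cocoercive boundary.
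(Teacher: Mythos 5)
Your proof is correct, and it follows the same broad outline as the paper's: express $F(\vu_K)$ via the consistency identity, construct a hard instance whose interesting block is a real $2\times 2$ representation of a complex scalar $z$ on the boundary of the cocoercivity disc $|z - L/2| \le L/2$, reduce to a real polynomial of degree at most $p$ in a scalar variable, and invoke Lemma~\ref{lem:poly-lower-bound}. Where you diverge is the key algebraic reduction. The paper chooses $\alpha^2 = L\eta - \eta^2$, discards the imaginary part via $|c_0(\eta+\alpha i)| \ge |\mathrm{Re}\, c_0(\eta+\alpha i)|$, and applies the lemma to $\mathrm{Re}\, c_0$ as a degree-$p$ polynomial in $\eta \in (0,L]$ with exponent $2K$, obtaining $\sup y\,|r(y)|^{2K} > L/(80p^2K)$. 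You instead keep the full modulus, observe via Newton's recursion $p_m = (2y/L)p_{m-1} - y\, p_{m-2}$ (with $y = |z|^2$, which parametrizes the boundary circle through $z\bar z = y$, $z + \bar z = 2y/L$) that $Q(y) := |C_0(z)|^2$ is a \emph{nonnegative} real polynomial in $y$ of degree at most $p$ with $Q(0)=1$, and apply the lemma on $(0, L^2]$ with exponent $K$. This buys a factor $\sqrt{2}$: you get $\|F(\vu_K)\| > LD/(p\sqrt{40K})$, strictly better than the paper's $LD/(p\sqrt{80K}) = LD/(4p\sqrt{5K})$, which you then relax to match the stated bound. Two minor cosmetic differences: your hard instance uses a single $2\times 2$ block padded by $L\mI$ (with $\vb$ supported on that block) rather than the paper's two $\tfrac d2\times\tfrac d2$ blocks — either is fine since only the nonzero component of $\vb$ matters — and you invoke the maximum modulus principle to restrict to the boundary circle, though for a lower bound it suffices simply to restrict attention there, as the paper does by its choice of $\alpha$.
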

\begin{proof}
Similar to~\cite{golowich2020last}, we start by showing that 
\begin{equation}\label{eq:z^(t)}
    \vu_k = (C_0(\mA)^k-\mI)\mA^{-1}\vb,
\end{equation} 
for all $k \geq 0$. This claim follows by induction on $k$. The base case $k = 0$ is immediate. For the inductive step, suppose that Eq.~\eqref{eq:z^(t)} holds for some $k - 1 \geq 0.$ Then by the definition of 1-SCLI algorithms and the consistency of $\cA$ (Definitions~\ref{def:1-SCLI-algo} and \ref{def:consistency}):
\begin{align*}
\vu_k &= C_0(\mA)\vu_{k-1} + N(\mA)\vb \\
&= C_0(\mA)(C_0(\mA)^{k-1} - \mI)\mA^{-1}\vb + (C_0(\mA) - \mI)\mA^{-1}\vb\\
%&= C_0(\mA)^k \mA^{-1}\vb-\mA^{-1}\vb\\
&= (C_0(\mA)^k - \mI)\mA^{-1}\vb.
\end{align*}
Therefore, $F(\vu_k)$ can be expressed as
\begin{equation}\label{eq:F = C_0^t*b}
    F(\vu_k) = \mA\vu_k + \vb = C_0(\mA)^k \vb.
\end{equation}

Let us now specify the ``hard instance.'' Consider $F(\vu) = \mA \vu + \vb,$ where $\mA$ can be expressed as  $\mA = \big[\subalign{\eta\mI \: \alpha\mI \\ -\alpha\mI\: \eta\mI}\big]$ for some $\eta, \alpha \in \rr_+.$  (Observe that such an $F$ can be obtained from the convex-concave objective $\phi(\vx, \vy) = \frac{1}{2} \eta\vx^T\vx - \frac{1}{2} \eta\vy^T\vy + \alpha\vx^T\vy + \vb_1^T\vx-\vb_2^T\vy,$ where $ \vx,\vy,\vb_1,\vb_2 \in \rr^{{d}/{2}}$, $\vb = [{\vb_1}^T {\vb_2}^T]^T$.) 

Let us now argue that for suitably chosen $\eta, \alpha,$ we have that $F$ is $\frac{1}{L}$-cocoercive. Let $\vu = [\vx^T\, \vy^T]^T$, $\vub = [\vxb^T \,\vyb^T]^T$ be an arbitrary pair of vectors from $\rr^d,$ where $\vx, \vy, \vxb, \vyb \in \rr^{d/2}.$ Then
\begin{equation*}
    \innp{F(\vu) - F(\vub), \vu - \vub} = \eta \|\vu - \vub\|^2
\end{equation*}
and
\begin{equation*}
    \|F(\vu) - F(\vub)\|^2 = 
    (\eta^2+\alpha^2)\|\vu - \vub\|^2.
\end{equation*}
Hence, for $\eta^2 + \alpha^2 \leq L\eta,$ we have $\innp{F(\vu) - F(\vub), \vu - \vub} \geq \frac{1}{L}\|F(\vu) - F(\vub)\|^2,$ i.e., $F$ is $\frac{1}{L}$-cocoercive. 

To complete the proof, it remains to show that 
\begin{equation*}
    \sup_{F \in \cF_{L, D}}{\|F(\vu_K)\|} \geq \frac{LD}{p\sqrt{80K}}.%,
\end{equation*}
%where the supremum above is attained. 
To do so, observe that by Eq.~\eqref{eq:F = C_0^t*b}, $\vu_0 = \zeros,$ and $\vu^* = -\mA^{-1}\vb,$ we have
\begin{equation*}
    \sup_{F \in \cF_{L, D}} \frac{\|F(\vu_K)\|^2}{\|\vu^* - \vu_0\|^2} \geq \sup_{\substack{\eta \in [0, L], \\ \alpha \in [0, \sqrt{L\eta - \eta^2}]}} \frac{\|C_0(\mA)^K\vb\|^2}{\|\mA^{-1}\vb\|^2}, 
\end{equation*}
where $\mA= \big[\subalign{\eta \mI \: \alpha\mI \\ -\alpha \mI\: \eta \mI}\big].$ 
Observe that the characteristic polynomial of $\mA = \big[\subalign{\eta \mI \: \alpha \mI \\ -\alpha \mI\: \eta \mI}\big]$ is:
$$
\mathrm{det}(\lambda \mI - \mathbf{A}) %= (\lambda - \eta)^{{d}/{2}}\Big(\lambda - \eta +\frac{\alpha^2}{\lambda-\eta}\Big)^{{d}/{2}} 
= ((\lambda-\eta)^2+\alpha^2)^{{d}/{2}}. 
$$
Hence, $\mA$ has eigenvalues:
$
\lambda_1 = \eta + \alpha i, \, \lambda_2 = \eta - \alpha i.
$ 
These conjugate eigenvalues have the same magnitude: $\sqrt{\eta^2 + \alpha^2}$. Accordingly, $\mathbf{A}^{-1}$ has eigenvalues: $
\lambda_1' = \frac{1}{\lambda_1},\quad\lambda_2' = \frac{1}{\lambda_2}$, which are also conjugate and equal in  magnitude. On the other hand, since $C_0(\mA) = \mI + N(\mA)\mA$, and, by assumption, $N(\mA)$ is a matrix polynomial of degree at most $p-1$ for some $p \in \mathbb{N}$ with real coefficients, $C_0(\mA)$ is a polynomial of $\mA$ with $C_0(\zeros_{d\times d}) = \mI$. Therefore, it can be expressed as: $$
C_0(\mA) = \mI + r_1\mA + r_2\mathbf{A}^2 + \dots + r_{p}\mA^{p},
$$
for some real-valued $r_1, r_2, r_3, \dots, r_{p}$. We denote the polynomial on complex field with the same real-valued coefficients as: $c_0(y) = 1 + r_1y + r_2y^2 + \dots + r_{p}y^{p}$. Then, by the spectral mapping theorem, the eigenvalues of $C_0(\mA)$ are: $c_0(\lambda_1)$ and $c_0(\lambda_2)$, which are again conjugate and have equal norms. Therefore, we have:
\begin{align*}
    \sup_{\substack{\eta\in [0, L]\\ \alpha \in [0, \sqrt{L\eta - \eta^2}]}}\frac{\norm{C_0(\mathbf{A})^K\vb}^2}{\norm{\mathbf{A}^{-1}\vb}^2}&= \sup_{\substack{\eta\in [0, L]\\ \alpha \in [0, \sqrt{L\eta - \eta^2}]}}\frac{|c_0(\lambda_1)|^{2K}\norm{\vb}^2}{\frac{1}{|\lambda_1|^2}\norm{\vb}^2} \\
    &=\sup_{\substack{\eta\in [0, L]\\ \alpha \in [0, \sqrt{L\eta - \eta^2}]}} (\eta^2 + \alpha^2)|c_0(\eta+\alpha i)|^{2K}.
\end{align*}

To derive the stated lower bound by applying Lemma~\ref{lem:poly-lower-bound}, we need to convert the above expression into a similar form: $\sup_{y\in (0,L]}y|r(y)|^k$. Here, we can observe the difference between the problem we are considering and the problem discussed in \cite{golowich2020last}. In~\cite{golowich2020last}, the eigenvalues are purely imaginary: $\nu i$ and $-\nu i$. As a result, the above expression can be written as: $\sup_{\nu\in (0,L]}\nu^2|c_0(\nu i)|^{2K}$. By taking the real part of this term, we get a smaller value $\sup_{\nu\in (0,L]}\nu^2|1-r_2\nu^2+r_4\nu^4-\dots+(-1)^{p'}r_{2p'}\nu^{2p'}|^{2K}$, where $p' = \lfloor p/2 \rfloor$. Thus, substituting $\nu^2$ with $y$, we get the equation that fits the inequality from Lemma~\ref{lem:poly-lower-bound}. However, the same strategy cannot be applied here since the real part of $(\eta^2 + \alpha^2)|c_0(\eta+\alpha i)|^{2K}$ is tangled up with $\alpha$ and $\eta$, hence making it impossible to get an equation of the form $y|r(y)|^k$ by simply taking its real part. 

Nevertheless, since we have the extra freedom of choosing $\alpha$, we can select $\alpha$ carefully to make the real part and imaginary part of $|c_0(\eta+\alpha i)|^{2K}$ separable, while keeping the constant $\eta^2 + \alpha^2$ large enough. In particular, this can be achieved for:
$$
\alpha^2 = L\eta-\eta^2.
$$
Observe that, as long as $\eta \leq L,$ we have $\alpha \in [0, \sqrt{L\eta - \eta^2}],$ as required in the bound above. 
It follows that:
\begin{align*}
    &\sup_{\substack{\eta\in [0, L]\\ \alpha \in [0, \sqrt{L\eta - \eta^2}]}} (\eta^2 + \alpha^2)|c_0(\eta + \alpha i)|^{2K} \\
    &\hspace{1in}\geq \sup_{\eta\in [0, L]}L\eta |c_0(\eta+\alpha i)|^{2K} \\
    &\hspace{1in}= \sup_{\eta\in [0, L]}L\eta |1+r_1(\eta+\alpha i) + \dots + r_{p}(\eta+\alpha i)^{p}|^{2K}.
\end{align*}
Observe that the factor $\alpha$ in the real terms of $(\eta+\alpha i)^j$ has only even order, therefore, $\mathrm{Re}(c_0(\eta + \alpha i))$ is a polynomial of $\eta$ and $\alpha^2$. Since $\alpha^2 = L\eta-\eta ^2$, it is actually a polynomial of $\eta$ exclusively with real-valued coefficients of degree at most $p$, which we denote as: $c'_0(\eta) = 1 + r'_1\eta + r'_2\eta^2 + \dots + r'_{p}\eta^{p}$. 
Therefore, we get:
\begin{align*}
    \sup_{\eta\in [0, L]}L\eta |c_0(\eta+\alpha i)|^{2K} 
    &\geq \sup_{\eta\in (0, L]}L\eta |\mathrm{Re}(c_0(\eta+\alpha i))|^{2K} \\
    &=\sup_{\eta\in (0, L]} L\eta|c_0'(\eta)|^{2K}. 
\end{align*}
By Lemma~\ref{lem:poly-lower-bound} and $\|\mA^{-1}\vb\| = D$, we now have:
\begin{align*}
    \sup_{F \in \cF_{L, D}} \frac{\|F(\vu_K)\|^2}{\|\vu^* - \vu_0\|^2} =  \sup_{F \in \cF_{L, D}} \frac{\|F(\vu_K)\|^2}{D^2} \geq \sup_{\eta\in (0, L]} L\eta|c_0'(\eta)|^{2K} \ge \frac{L^2}{80p^2K},
\end{align*}
and the claimed lower bound follows after rearranging the last inequality.
\end{proof}

The implications of Theorem~\ref{thm:lower-bound-cocoercive-op} are as follows. Among all algorithms that update their iterates as in Eq.~\eqref{eq:gen-algos-lb} and use constant (independent of the iteration count) step sizes $\beta_{i, k},$ GDA is iteration complexity-optimal for minimizing the norm of a cocoercive operator. This means that other standard methods such as the extragradient/mirror-prox~\cite{korpelevich1977extragradient,nemirovski2004prox} method, dual extrapolation~\cite{nesterov2007dual}, or the method of Popov~\cite{Popov1980}, which fall into the same category, cannot attain a convergence rate for minimizing $\|F(\cdot)\|$ that is faster than $1/\sqrt{k}.$ Thus, choosing step sizes $\beta_{i, k}$ that depend on the iteration count is essential for achieving the faster $1/k$ rate of Halpern's algorithm. Furthermore, this rate is unimprovable for any of the typical iterative methods that take the form from Eq.~\eqref{eq:gen-algos-lb}. 

%%%%%%%%%%%%%%%%%%%%%%
\section{Conclusion and Future Work}
We presented a general and unifying potential function-based framework for analyzing the convergence of first-order algorithms under the gradient norm criterion in the settings of convex and min-max optimization. The framework is intuitive in that it provides an interpretation of the mechanism driving the convergence as a trade-off between reducing the norm of the gradient and reducing some notion of an optimality gap. 

Many interesting questions for future work remain. In particular, our framework is primarily applicable to Euclidean setups. Thus, it is an intriguing question whether it is possible to generalize it to other normed spaces. We note that beyond the Euclidean setups, the only results with near-optimal convergence for $\ell_p$-normed spaces in the setting of convex optimization are those for $\ell_{\infty}$ (where an $\ell_\infty$ variant of gradient descent is optimal) and the very recent results for $p \in [1, 2]$ that are based on a regularization trick~\cite{diakonikolas2021complementary}. In a different direction, as conjectured in Section~\ref{sec:cvx-opt}, it appears that fixing either the number of iterations or the accuracy of the problem in advance is crucial for achieving near optimal rates in the case of convex objectives, even in Euclidean setups. Proving such a lower bound would be very interesting, as it would likely require completely new mathematical techniques. Finally, very little is known about the convergence in gradient norm in convex-concave min-max optimization setups, both from the aspect of algorithms and the lower bounds. In particular, we are not aware of any lower bounds outside of the Euclidean setup considered here, while, similar as in the case of convex optimization, the only near-optimal algorithm is based on a regularization trick and applies only to $p \in [1, 2]$~\cite{song2020optimistic}. %or of any algorithms for $\ell_p$ normed spaces that can match the convergence rate of Halpern iteration. 

\bibliographystyle{abbrv}
\bibliography{references.bib}

\appendix

\section{Sequence Growth for the Optimized Gradient Method}

This section provides a technical lemma used in the proof of Theorem~\ref{thm:OGM-G}.

\begin{lemma}\label{lemma:OGM-G-A_k-growth}
Let $\{\beta_{i, k}\}_{i\leq k}$, $\{a_k\}_{k\geq 0},$ $\{A_k\}_{k\geq 0}$ be the sequences of real numbers that for $k \in \{0, \dots, K\}$ satisfy $\beta_{k, k} = 1,$ $A_k = \sum_{i=0}^k a_i,$ and 
\begin{gather}
    \beta_{k, K-1} + \frac{a_{k+1}}{A_K} = \frac{A_{k+1}}{a_{k+1}}, \label{eq:OGM-G-cond-1-appx}\\
    A_{k+1}\beta_{j, k} = A_k \beta_{j, k-1} + a_{k+1}\Big(\beta_{j, K-1} + \frac{a_{j+1}}{A_K}\Big) + a_{j+1}\Big(\beta_{k, K-1} + \frac{a_{k+1}}{A_K}\Big). \label{eq:OGM-G-cond-2-appx}
\end{gather}
Then the sequence $\{A_k\}_{k \geq 0}$ can be chosen as
\begin{equation}\label{eq:A_k expression-appx}
    \begin{cases}
    A_k = 1, & \text{ if } k = K;\\
    A_{k} = A_{k+1}\big[1+\frac{1}{2}A_{k+1}-\frac{1}{2}\sqrt{A_{k+1}(4+A_{k+1})}\big], & \text{ if } 0 \leq k \leq K-1
    \end{cases}
\end{equation}
and $\frac{A_K}{A_0} \geq \frac{(K+2)^2}{4}.$
\end{lemma}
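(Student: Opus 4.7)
The plan is to split the lemma into two parts: (A) establish consistency, i.e., that the recursion \eqref{eq:A_k expression-appx} is compatible with the constraints \eqref{eq:OGM-G-cond-1-appx}, \eqref{eq:OGM-G-cond-2-appx}, and $\beta_{k,k}=1$ for a suitable choice of coefficients $\{\beta_{j,k}\}$; and (B) establish the quadratic growth bound $A_K/A_0 \geq (K+2)^2/4$.

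For (A), I would try the ansatz $\beta_{j,k} = A_{j+1}/a_{j+1} - a_{j+1}\, s_k$ for an unknown one-dimensional ``shape'' sequence $\{s_k\}$. This form is motivated by \eqref{eq:OGM-G-cond-1-appx}, which forces $s_{K-1} = 1/A_K$. Substituting the ansatz into \eqref{eq:OGM-G-cond-2-appx} (with \eqref{eq:OGM-G-cond-1-appx} as equality) causes the $A_{j+1}/a_{j+1}$ terms to cancel, since $A_k + a_{k+1} = A_{k+1}$, and reduces the two-index recurrence to the scalar one $A_{k+1}s_k = A_k s_{k-1} - A_{k+1}/a_{k+1}$. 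The boundary condition $\beta_{k,k}=1$ independently forces $s_k = A_k/a_{k+1}^2$. Matching the two expressions with the boundary $s_{K-1} = 1/A_K$ (which, using $A_K = 1$, is also consistent with $s_{K-1} = 1/A_K^2$) suggests the closed form $s_k = 1/A_{k+1}^2$. Equating $s_k = A_k/a_{k+1}^2 = 1/A_{k+1}^2$ gives the algebraic relation $a_{k+1}^2 = A_k A_{k+1}^2$, and a direct plug-in verifies that it satisfies both the scalar recurrence and both boundary values. Viewed as a quadratic in $A_k = A_{k+1} - a_{k+1}$ with $A_{k+1}$ known, this relation is equivalent to the closed-form \eqref{eq:A_k expression-appx}.

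For (B), I would prove by reverse induction on $k$ the stronger estimate $A_k \leq 4/(K - k + 2)^2$. The base case $A_K = 1 = 4/4$ holds with equality, and evaluating the estimate at $k = 0$ yields $A_0 \leq 4/(K+2)^2$, i.e., the desired $A_K/A_0 \geq (K+2)^2/4$. Writing the recursion as $A_k = f(A_{k+1})$ with $f(x) = x\bigl[1 + x/2 - \sqrt{x(x+4)}/2\bigr]$, the induction step requires two elementary facts: (i) $f$ is strictly increasing on $(0,\infty)$, which is clear from the factorization $f(x) = \mu(x)^2$ with $\mu(x) = (\sqrt{x(x+4)}-x)/2$, since $\mu'(x) > 0$ reduces to $(x+2)^2 > x(x+4) = (x+2)^2 - 4$; and (ii) $f(4/m^2) \leq 4/(m+1)^2$ for every integer $m \geq 1$. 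For (ii), direct substitution gives $f(4/m^2) = 4(\sqrt{m^2+1}-1)^2/m^4$, and the inequality reduces to $(\sqrt{m^2+1}-1)(m+1) \leq m^2$, which follows from the elementary bound $\sqrt{m^2+1} \leq m + 1/(2m)$ (immediate upon squaring). The main obstacle is step (A): identifying the ansatz that decouples the $j$-dependence in \eqref{eq:OGM-G-cond-2-appx} is the only nonmechanical move, whereas (B) is a routine induction once the invariant $A_k \leq 4/(K-k+2)^2$ has been guessed.
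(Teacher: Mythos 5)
Your proof is correct, and part (A) takes a genuinely different route from the paper, while part (B) is essentially the paper's argument under a reciprocal change of variables. For the recursion itself, the paper proves by backward induction on $k$ that \eqref{eq:OGM-G-cond-1-appx}--\eqref{eq:OGM-G-cond-2-appx} together with $A_K = 1$ and $\beta_{k,k}=1$ \emph{force} the relation $a_k^2 = A_{k-1}A_k^2$; the inductive step there is a somewhat involved telescoping computation that unrolls $A_{n+1}\beta_{n-1,n}$ as a sum over $\ell \in \{n+1,\dots,K-1\}$ and collapses it using the already-established cases. Your ansatz $\beta_{j,k} = A_{j+1}/a_{j+1} - a_{j+1}s_k$ avoids this entirely: after using \eqref{eq:OGM-G-cond-1-appx} (which pins $s_{K-1} = 1/A_K$) inside \eqref{eq:OGM-G-cond-2-appx}, the $j$-dependent terms cancel against $A_k + a_{k+1} = A_{k+1}$, and what is left is a scalar recurrence in $s_k$ for which $s_k = 1/A_{k+1}^2$ (equivalently $a_{k+1}^2 = A_k A_{k+1}^2$, which is exactly \eqref{eq:A_k expression-appx} solved as a quadratic in $A_k$) is a one-line verification. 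This is cleaner and makes the decoupling of the two indices explicit; what it does not give for free is that this $\{A_k\}$ is \emph{forced} rather than merely admissible, but ``can be chosen'' is all the lemma asserts and all that Theorem~\ref{thm:OGM-G} requires. For the growth bound, the paper substitutes $D_n = 1/A_{K-n}$ and shows $D_n \geq (n+2)^2/4$ by forward induction, whereas you run the reverse induction $A_k \leq 4/(K-k+2)^2$ directly; the two one-step estimates are reciprocals of each other, so the content is identical, though your factorization $f(x) = \mu(x)^2$ with $\mu(x) = \tfrac{1}{2}(\sqrt{x(x+4)} - x)$ is a tidier way to see that the one-step map is increasing than the paper's implicit argument.
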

\begin{proof}
%Now, by Lemma ~\ref{lemma:OGM-G-final-pot}, the largest growth of $\frac{A_K}{A_0}$ is $O(K^2)$. However, it still remains to show that  $\frac{A_K}{A_0}$ can achieve the $K^2$ growth, i.e., $\frac{A_K}{A_0} = \Omega(K^2)$. To give this result, we first prove the sequence $\{A_k\}_{k=0}^K$ defined in Lemma ~\ref{lemma:OGM-G-final-pot} can be expressed in the recursive form from  Eq.~\eqref{eq:A_k expression}. Then, using this recursive form, we further show that it must be $\frac{A_K}{A_0} \geq \frac{4}{K^2}$. 
%
First, we show that the sequence $\{A_k\}^K_{k=0}$ with $A_K = 1$ and that satisfies Eq.~\eqref{eq:OGM-G-cond-1-appx} and Eq.~\eqref{eq:OGM-G-cond-2-appx} has the following recursive relationship between two successive terms:
\begin{equation}\label{eq:A_k recursive 1}
    \frac{1}{A_{k-1}} = \frac{1}{A_k}+\frac{A_k}{a_k}
\end{equation}
which is equivalent to:
\begin{equation}\label{eq:A_k recursive 2}
    \frac{A_{k-1}A_k}{a_k}=\frac{a_k}{A_k}.
\end{equation}
Solving for $A_{k-1},$ this relationship leads to Eq.~\eqref{eq:A_k expression-appx}. We prove the recursive relationship by induction on $k$. First, for the base case $k = K$, setting $k = K-1$ in Eq.~\eqref{eq:b_k,K-1}, we have: 
$\beta_{K-1,K-1} = \frac{A_K}{a_K}-\frac{a_K}{A_K}.$
Since we have set $A_K = 1$ and $\beta_{K-1,K-1} = 1$, it follows that:
$$\frac{a_K}{A_K} = \frac{A_K-a_K}{a_K} = \frac{A_{K-1}}{a_K} = \frac{A_KA_{K-1}}{a_K}$$
which coincides with Eq.~\eqref{eq:A_k recursive 2}.

\iffalse %%%%%%%%%%%%%%% You only need the base case + inductive hypothesis
For $k = K-1$, let $k = K-1$, $j = K-2$ in Eq.~\eqref{eq:A_k,b_j,k}, we have:
$$A_K\beta_{K-2,K-1} = A_{K-1}+a_K\frac{A_{K-1}}{a_{K-1}}+a_{K-1}\frac{A_K}{a_K}$$
Plugging in Eq.~\eqref{eq:b_k,K-1} and using the fact that $A_K = 1$,
\begin{align*}
    A_K\frac{A_{K-1}}{a_{K-1}} &= A_{K-1}+\frac{a_{K-1}}{A_K}+a_K\frac{A_{K-1}}{a_{K-1}}+a_{K-1}\frac{A_K}{a_K}\\
    &=\frac{A_{K-1}(a_{K-1}+a_K)}{a_{K-1}}+a_{K-1}\big(\frac{1}{A_K}+\frac{A_K}{a_K}\big)\\
    &=\frac{A_{K-1}(A_K-A_{K-2})}{a_{K-1}}+\frac{a_{K-1}}{A_{K-1}}
\end{align*}
The last equality holds for the fact that Eq.~\eqref{eq:A_k recursive 1} is valid for $k=K$. Moving the first term in the right hand side to the left, we get:
$$
\frac{A_{K-1}A_{K-2}}{a_{K-1}} = \frac{a_{K-1}}{A_{K-1}}
$$
showing that the desired relationship is valid for $k=K-1$.
\fi %%%%%%%%%%%%%%%%%%%%%%%%%%%%%%%%%%%%%%

Now assume that Eq.~\eqref{eq:A_k recursive 1} (equivalently, Eq.~\eqref{eq:A_k recursive 2}) holds for $k = K, K-1, \dots, n+1$, and consider $k = n$. Setting $k = n$, $j = n-1$ in Eq.~\eqref{eq:A_k,b_j,k}, we have:
$$
A_{n+1}\beta_{n-1,n} = A_n\beta_{n-1,n-1}+a_{n+1}\frac{A_n}{a_n}+a_n\frac{A_{n+1}}{a_{n+1}}
$$
Hence:
\begin{equation}\label{eq:A_n+1 b_n-1,n}
    A_{n+1}\beta_{n-1,n} = A_n+a_{n+1}\frac{A_n}{a_n}+a_n\frac{A_{n+1}}{a_{n+1}}
\end{equation}
It turns out that we can express $A_{n+1}\beta_{n-1,n}$ using $A_k$ for $k$ ranging from $n+1$ to $K$. Let $k=\ell$, $\ell = n+1, n+2, \cdots, K-1$ and $j=n-1$ in Eq.~\eqref{eq:A_k,b_j,k}; then, we get:
$$
A_{\ell} \beta_{n-1, l-1} = A_{\ell+1}\beta_{n-1, \ell} - a_{\ell+1} \frac{A_{n}}{a_{n}} - a_{n} \frac{A_{\ell+1}}{a_{\ell+1}}.
$$
This is a recursive relation between $A_{\ell} \beta_{n-1, \ell-1}$ and $A_{\ell+1}\beta_{n-1, \ell}$. Applying this relation recursively from $\ell = n+1$ to $\ell = K-1$, we get:
\begin{align*}
    A_{n+1}\beta_{n-1,n}&=A_{K}\beta_{n-1,K-1}-\frac{A_n}{a_n}(a_{n+2}+\cdots+a_{K})-a_n\big(\frac{A_{n+2}}{a_{n+2}}+\cdots+\frac{A_K}{a_K}\big)\\
    &=A_K\big(\frac{A_n}{a_n}-\frac{a_n}{A_K}\big)-\frac{A_n}{a_n}\sum_{\ell=n+1}^{K-1}(A_{\ell+1}-A_{\ell})-a_n\sum_{\ell=n+1}^{K-1}\big(\frac{1}{A_{\ell}}-\frac{1}{A_{\ell+1}}\big)\\
    &=A_K\big(\frac{A_n}{a_n}-\frac{a_n}{A_K}\big)-\frac{A_n}{a_n}(A_K-A_{n+1})-a_n\big(\frac{1}{A_{n+1}}-\frac{1}{A_K}\big)\\
    &=\frac{A_nA_{n+1}}{a_n}-\frac{a_n}{A_{n+1}}.
\end{align*}
The second equation is valid due to our inductive hypothesis for $k = n+2, n+3, \dots, K$. To derive the last equation, we use that $A_K = 1$, by the lemma assumption. Plugging the above equation into Eq.~\eqref{eq:A_n+1 b_n-1,n}, we get:
$$
\frac{A_nA_{n+1}}{a_n} = A_n\frac{a_n+a_{n+1}}{a_n}+a_n\big(\frac{A_{n+1}}{a_{n+1}}+\frac{1}{A_{n+1}}\big). 
$$
Using the assumption that $\frac{1}{A_{n}} = \frac{1}{A_{n+1}}+\frac{A_{n+1}}{a_{n+1}}$,
we obtain Eq.~\eqref{eq:A_k recursive 2} for $k = n$, completing the inductive argument.

\iffalse
Substituting $a_k$ with $A_k-A_{k-1}$ in Eq.~\eqref{eq:A_k recursive 2}, we get an equation for $A_k$ and $A_{k+1}$:
$$
A_{k}A_{k+1}^2 = (A_{k+1}-A_k)^2
$$
Since $a_k \geq 0$, hence $A_k \geq A_{k-1}$ we get the expression ~\eqref{eq:A_k expression}.
\fi

The recursive relationship between $A_k$ and $A_{k+1}$ from Eq.~\eqref{eq:A_k expression} be equivalently written as%can be further transformed as:
\iffalse
\begin{align*}
    A_k&=A_{k+1}\Big(1-\frac{2A_{k+1}}{A_{k+1}+\sqrt{A_{k+1}^2+4A_{k+1}}}\Big)\\
    &=A_{k+1}\frac{\sqrt{A_{k+1}^2+4A_{k+1}}-A_{k+1}}{\sqrt{A_{k+1}^2+4A_{k+1}}+A_{k+1}}\\
    &=\frac{4A_{k+1}^2}{2A_{k+1}^2+4A_{k+1}+2A_{k+1}\sqrt{A_{k+1}^2+4A_{k+1}}}
\end{align*}
Hence:
\fi
\begin{equation*}
    \frac{1}{A_k} = \frac{1}{4}\Big(2+\frac{4}{A_{k+1}}+2\sqrt{1+\frac{4}{A_{k+1}}}\Big)
\end{equation*}
Denote $D_n = \frac{1}{A_{K-n}}$ for $n \in \{0, \dots, K\}.$ Then
\begin{equation}\label{eq:expression D_n}
    D_n=\frac{1}{2}+D_{n-1}+\sqrt{D_{n-1}+\frac{1}{4}}.
\end{equation}
We prove by induction that: 
\begin{equation}\label{eq:lower bound D_n}
    D_n\geq\frac{(n+2)^2}{4}.
\end{equation}
As $D_0 = \frac{1}{A_K} = 1,$ $D_0 = \frac{(0+2)^2}{4}$ holds by definition. Now suppose it also for some $n = j$, $0 \leq j \leq K-1$. Then:
\begin{align*}
    D_{j+1} &= \frac{1}{2}+D_j+\sqrt{D_j+\frac{1}{4}}\\
    &\geq\frac{1}{2}+\frac{1}{4}(j+2 + 1 - 1)^2+\frac{1}{2}\sqrt{(j+2)^2}\\
    &= \frac{1}{2} + \frac{1}{4}(j+3)^2 - \frac{1}{2}(j+3) + \frac{1}{4} + \frac{1}{2}(j+2)\\
    &>\frac{1}{4}(j+3)^2
\end{align*}
Thus, $D_K = \frac{1}{A_0} = \frac{A_K}{A_0} \geq \frac{(K+2)^2}{4},$ as claimed.
\iffalse
Therefore, Eq.~\eqref{eq:lower bound D_n} is valid for every $n\in 0,1, \cdots, K$. Thus, since $A_K = 1$, we have:
$$
\frac{A_K}{A_0} = D_K \geq \frac{K^2}{4},
$$
leading to the final result that
$$
\|\nabla f(\vx_K)\|^2 \leq \frac{8L}{K^2}\Big(f(\vx_0) - f(\vx_K) + \frac{1}{2L}\|\nabla f(\vx_0)\|^2\Big),
$$
i.e.,
\begin{equation*}
    \|\nabla f(\vx_K)\|^2 = O\Big( \frac{L(f(\vx_0) - f(\vx^*))}{K^2}\Big).
\end{equation*}
\fi
\end{proof}

\end{document}